\newtheorem{thm}{Theorem}[section]
\newtheorem{cor}[thm]{Corollary}
\newtheorem{lem}[thm]{Lemma}
\newtheorem{prop}[thm]{Proposition}
\theoremstyle{definition}
\newtheorem{defin}[thm]{Definition}
\theoremstyle{claim}
\theoremstyle{definition}
\newtheorem{exm}[thm]{Example}
\newtheorem{conjecture}[thm]{Conjecture}
\theoremstyle{remark}
\newtheorem*{rem}{Remark}
\DeclareMathOperator{\hocolim}{hocolim}
\newcommand{\R}{{\mathbb R}}
\newcommand{\Z}{{\mathbb Z}}
\newcommand{\F}{{\mathbb F}}
\begin{document}

\title{Higher commutativity and nilpotency in finite groups.}

\author[E.~Torres Giese]{E.~Torres-Giese}
\address{Department of Mathematics,
University of Michigan, Ann Arbor MI 48109, USA}
\email{etorresg@umich.edu}
\keywords{ Dirichlet series, coset poset, commuting elements, probabilistic zeta function, conjugacy classes, commutativity degree.} 

\date{\today}
\begin{abstract}
We consider ordered tuples in finite groups generating nilpotent subgroups. Given an integer $q$ we consider the poset of nilpotent subgroups of class 
less than $q$ and its corresponding coset poset. These posets give rise to a family of finite Dirichlet series parametrized by the nilpotency class of the subgroups, which in turn reflect probabilistic and topological invariants determined by these subgroups. Connections of these series to filtrations of 
classifying spaces of a group are discussed.   
\end{abstract}

\maketitle

\section{Introduction}

In 1936 P.~Hall addressed to some extent the problem of computing the probability that a randomly chosen
ordered $s$-tuple of elements in a finite group $G$ generates the group. Hall proved that this probability, denoted by
 $P(G,s)$, can be expressed as a finite Dirichlet series. For instance (see~\cite{Boston}),

\[ P(PSL(2,7),s)=1- \frac{14}{7^s}-\frac{8}{8^s}+\frac{21}{21^s}+\frac{28}{28^s}+\frac{56}{56^s}-\frac{84}{84^s} .\]

Hall's results allow us to think of this as a function over the complex numbers. In~\cite{Brown} this function was related
to the coset poset of a finite group. This poset is the set of all proper cosets of a group ordered by inclusion. If we denote
this poset by $\mathcal{C}(G)$, then Brown (following Bouc) proved that 
\[ P(G,-1)=1- \chi(\mathcal{C}(G)) .\]

The reciprocal of $P(G,s)$ is called the probabilistic zeta function of $G$ (see~\cite{Boston}).

\medskip
We can twist the probability question to ask the following: given a finite group $G$, what is the probability that a randomly chosen
ordered $s$-tuple generates an abelian subgroup of $G$? If we denote this probability by $P_2(G,s)$, then we have
\[ P_2(G,s)=\frac{|Hom(\Z^s,G)| }{|G|^s} \]
since the set of ordered commuting $s$-tuples in $G$ can be identified with the set of group homomorphisms $Hom(\Z^s,G)$.
The number $P_2(G,s)$ was also studied in~\cite{Lescot} under the name of multiple commutativity degree.
The set (space if $G$ has some topology) $Hom(\Z^s,G)$ appears in different contexts of mathematics such as 
Differential Geometry, Group Cohomology, and $K$-theory (see~\cite{AC}). The sets $Hom(\Z^s,G)$, $s\geq 0$, can
be assembled to form a simplicial space. The realization of this simplicial space is denoted by $B(2,G)$ and it is the first layer 
of a filtration of the classifying space $BG$. 
The space $B(2,G)$ turns out to capture delicate information about the group $G$. 
For instance, when $G$ is finite of odd order the pull-back
fibration $E(2,G)\to B(2,G)$, obtained from the universal $G$-bundle over $BG$, captures the celebrated
Feit-Thompson Theorem;  and from the point of view of group cohomology, it can be seen that there is an 
$F$-monomorphism $ H^*(BG;\F_p)\to H^*(B(2,G);\F_p) $.  In this paper we will show that the space $B(2,G)$ is closely related
to the probability function $P_2(G,s)$. 

Let $\mathcal{N}_2(G)$ be the poset of all abelian subgroups of $G$. It was proved in~\cite{ACT} that the space $B(2,G)$ is a 
homotopy colimit, namely
\[ B(2,G)\simeq \underset{A\in \mathcal{N}_2(G)}\hocolim~ BA ,\]  
and this can be used to show that 
\[ E(2,G)\simeq \underset{A\in \mathcal{N}_2(G)}\hocolim~ G/A .\]  

For instance, if $q=2^n$ and $n\geq 2$, then
\[ B(2,SL(2,\F_q)) \simeq [\bigvee^{q+1} B(\mathbb Z/q)^n ]
\vee [\bigvee^{\frac{1}{2}q(q-1)}B\mathbb Z/(q+1)] \vee [\bigvee^{\frac{1}{2}q(q+1)} B\mathbb Z/(q-1)],\]
and
\[ E(2,SL(2,\F_q))=\bigvee_{(q^2-1)^2(q+1)-q^2(q^2+1)+1} S^1 .\]

Lets define $\mathcal{C}_2(G)$ as the poset of all cosets of proper abelian subgroups. 
In this context we have the following result

\begin{thm} If $G$ is a finite group, then 
\begin{enumerate}
\item there is a homotopy equivalence $\mathcal{C}_2(G)\simeq E(2,G)$,
\item $P_2(G,s)$ is a finite Dirichlet series, and 
\item $P_2(G,-1)=\chi(E(2,G))$. 
\end{enumerate}
\end{thm}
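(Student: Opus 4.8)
The plan is to treat the three parts in order, taking the homotopy colimit description $E(2,G)\simeq\hocolim_{A\in\mathcal N_2(G)}G/A$ as the common starting point. For part (1) I would identify $E(2,G)$ with the nerve of the coset poset via Thomason's theorem. Regard $A\mapsto G/A$ as a diagram of discrete spaces over the poset $\mathcal N_2(G)$, with structure map $G/A\to G/B$ sending $gA\mapsto gB$ for $A\le B$. Since the values are discrete, Thomason's theorem (equivalently, the standard fact that the homotopy colimit of a diagram of sets is the nerve of its category of elements) gives $\hocolim_{A}G/A\simeq N(\mathcal N_2(G)\wr(G/-))$. The Grothendieck construction $\mathcal N_2(G)\wr(G/-)$ has objects the pairs $(A,gA)$ and a unique morphism $(A,gA)\to(B,hB)$ exactly when $A\le B$ and $gB=hB$, i.e. when $gA\subseteq hB$ as cosets. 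Thus the category of elements is precisely the poset of cosets of abelian subgroups ordered by inclusion; when $G$ is nonabelian every abelian subgroup is proper, this poset is $\mathcal C_2(G)$, and we obtain $E(2,G)\simeq\mathcal C_2(G)$ (the abelian case being the degenerate one, $E(2,G)\simeq *$).

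For part (2) I would run Hall-style Möbius inversion over the subgroup lattice. Writing $\phi_A(s)$ for the number of $s$-tuples in $A$ that generate $A$, every commuting $s$-tuple generates a unique abelian subgroup, so $|\Hom(\Z^s,G)|=\sum_{A\in\mathcal N_2(G)}\phi_A(s)$. Since subgroups of an abelian group are abelian, $|A|^s=\sum_{B\le A}\phi_B(s)$, whence Möbius inversion yields $\phi_A(s)=\sum_{B\le A}\mu(B,A)\,|B|^s$. Therefore $|\Hom(\Z^s,G)|$ is an integer combination of the terms $|B|^s$, and dividing by $|G|^s$ turns each into $1/[G:B]^s$; as $[G:B]$ ranges over positive integers dividing $|G|$, the quantity $P_2(G,s)$ is a finite Dirichlet series.

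For part (3) I would compute $\chi(E(2,G))$ from the bar construction of the homotopy colimit and match it with the value at $s=-1$. Counting nondegenerate simplices gives $\chi(E(2,G))=\sum_{k\ge 0}(-1)^k\sum_{A_0<\cdots<A_k}[G:A_0]$, the inner sum over strict chains of abelian subgroups. Grouping by the bottom term $B=A_0$ and invoking Philip Hall's identity (the alternating chain count from $B$ equals $\sum_{A\ge B}\mu(B,A)$) rewrites this as $\sum_{B}[G:B]\sum_{A\ge B}\mu(B,A)$. On the other side, evaluating the series from part (2) at $s=-1$ gives $P_2(G,-1)=|G|\sum_{A}\phi_A(-1)=\sum_{B}[G:B]\sum_{A\ge B}\mu(B,A)$, the same expression. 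The main obstacle is the bookkeeping in this last step: keeping the Euler-characteristic sign conventions straight, confirming that the interval Möbius functions in $\mathcal N_2(G)$ agree with those of the subgroup lattice, and checking that—unlike the classical coset poset, where the whole group forces the reduced correction in $1-\chi$—the family of abelian subgroups has no common top when $G$ is nonabelian, so that $\chi$ itself appears. Verifying the two degenerate boundary contributions (the $k=0$ term and, in the abelian case, the $B=G$ term) then completes the identification.
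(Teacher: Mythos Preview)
Your argument is correct. Part~(1) is exactly the paper's approach: identify the homotopy colimit $\hocolim_{A\in\mathcal N_2(G)}G/A$ with the nerve of its category of elements, which is the coset poset $\mathcal C_2(G)$ (under the standing hypothesis that $G$ is nonabelian).

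For parts~(2) and~(3) you take a different, though closely related, route. The paper adjoins $G$ as a formal top element, sets $\mathcal L_2(G)=\mathcal N_2(G)\cup\{G\}$, and works with the M\"obius values $\mu_2(A,G)$ of this augmented poset: a swap of sums together with the identity $\sum_{H\le A\le G}\mu_2(A,G)=0$ gives the single-sum formula
\[
P_2(G,s)=-\sum_{A\in\mathcal N_2(G)}\frac{\mu_2(A,G)}{|G\!:\!A|^s},
\]
and the Bouc--Brown chain count yields $\chi(\mathcal C_2(G))=-\sum_{A}\mu_2(A,G)\,|G\!:\!A|$, so the identity $P_2(G,-1)=\chi(E(2,G))$ is immediate. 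You instead stay inside $\mathcal N_2(G)$, decompose by the generated subgroup via the Eulerian functions $\phi_A(s)$, and use the interval M\"obius numbers $\mu(B,A)$; your coefficient of $|G\!:\!B|^{-s}$ is $\sum_{A\ge B}\mu(B,A)$. The two are literally the same number, since by definition $\mu_2(B,G)=-\sum_{A\ge B,\,A\in\mathcal N_2(G)}\mu(B,A)$. What the paper's packaging buys is a formula that visibly parallels Hall's classical $P(G,s)=\sum_H\mu(H,G)/|G\!:\!H|^s$ and plugs directly into the existing Bouc--Brown Euler-characteristic computation; what your packaging buys is that you never have to introduce the artificial top element, and the ``obstacle'' you flag (no common top, hence $\chi$ rather than $1-\chi$) is handled automatically.
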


For instance,
\[ P_2(A_5,s)=\frac{6}{12^s}+\frac{5}{15^s}+\frac{10}{20^s}-\frac{20}{60^s}. \]

\bigskip
The poset $\mathcal{C}_2(G)$ has some intriguing properties. Brown in his paper~\cite{Brown} asked: Can we characterize finite 
solvable groups in terms of the combinatorial topology of the coset poset?
The answer is yes for groups of odd order if instead we consider the poset $\mathcal{C}_2(G)$ since, as noted in~\cite{ACT}, the
following statements are equivalent 
\begin{enumerate}
\item Every group of odd order is solvable.
\item The  map $H_1( \mathcal{C}_2(G))\to H_1(B(2,G))$ is not onto for every group $G$ of odd order.  
\end{enumerate}

\medskip
This probabilistic setting can be extended to higher classes of nilpotency. We will show that there is a finite Dirichlet series whose value at a positive integer $s$ is equal to
\[ \frac{ |\{ (g_1,\ldots, g_s)\in G^s: \Gamma^q(\langle g_1,\ldots, g_s\rangle\}=1 |}{|G|^s} \]
where $\Gamma^q$ stands for the $q^{th}$-stage of the lower central series. Note that this latter ratio is precisely the probability
that a randomly chosen $s$-tuple generates a nilpotent subgroup of class less than $q$. 
The paper is organized as follows: in Section 2 we cover some background; in Section 3 we relate  
the coset posets to probabilistic invariants via M\"obius functions; in Section 4 we explore some properties of the finite Dirichlet
series afforded by nilpotent subgroups; in Section 5 we provide an explicit formula to compute the number of commuting elements in 
a symmetric group; and in Section 6 we present upper and lower bounds for the probabilistic invariants aforementioned. 
In this paper a group will always be finite unless otherwise stated.


\section{Preliminaries}

Recall that the lower central series of a group $K$ is defined inductively by $\Gamma^1(K)=K$ and $\Gamma^q(K)=[\Gamma^{q-1}(K),K]$.
We say that a group $K$ is nilpotent of class $c$ if $\Gamma^{c+1}(K)=1$ and $\Gamma^q(K)\neq 1$ for $q\leq c$. By convention we will write
$\Gamma^\infty(K)=1$. Let $G$ be a group, and fix $q\geq 2$. We define 
\[ B_n(G,q)=Hom(F_n/\Gamma^q_n,G)\]
and 
\[ E_n(G,q)=G\times Hom(F_n/\Gamma_n^q,G),\]
where $F_n$ is the free group on $n$ generators and $\Gamma^q_n=\Gamma^q(F_n)$. Note that $B_n(G,q)\subseteq G^n$ and
$E_n(G,q)\subseteq G^{n+1}$. The following functions provide the sets $B_*(G,q)$ and $E_*(G,q)$ with a simplicial structure: 
$d_i:E_n(q,G)\to E_{n-1}(q,G)$ for $0\le i\le n$,
and $s_j:E_n(q,G)\to E_{n+1}(q,G)$ for $0\leq j\leq n$, are given by
\[d_i(g_0,\ldots,g_n)=\left\lbrace\begin{array}{lr}
(g_0,\ldots,g_i\cdot g_{i+1},\ldots,g_n) & 0\leq i<n, \\
(g_0,\ldots,g_{n-1}) & i=n,\\
 \end{array}\right. \]
and
\[ s_j(g_0,\ldots,g_n)=(g_0,\ldots,g_j,e,g_{j+1},\ldots,g_n)\]
for $0\le j\le n$. Similarly, we have maps $d_i$ and $s_j$ for $B_*(q,G)$ defined in the same way, except that the
first coordinate $g_0$ is omitted and the map $d_0$ takes the form $d_0(g_1,\ldots,g_n)=(g_2,\ldots,g_n)$.
Note that $G$ acts on $E_*(G,q)$ by multiplication on the first coordinate $g(g_0,g_1\ldots,g_n)=(gg_0,g_1,\ldots,g_n)$, 
with orbit space homeomorphic to $B_*(q,G)$. We denote by $B(q,G)$ and $E(q,G)$ the geometric realizations of the aforementioned simplicial sets.
Thus we have $G$-bundles $E(q,G)\to B(q,G)$ for each $q\geq 2$, and fibrations $E(q,G)\to B(q,G)\to BG$.  
Unlike the classical situation, the space $E(q,G)$ is not necessarily contractible. Note that if $E(q,G)$ is contractible, then $B(q,G)\simeq BG$.
We can also consider the simplicial set of $BN_*(G)$ whose set of $n$-simplices is defined as   
 \[ \{ (g_1,\ldots,g_n)|\langle g_1,\ldots,g_n\rangle \neq G\}.\]
Likewise, we can define $NE_*(G)$ as the simplicial set whose $n$-simplices is defined as 
$G\times BN_n(G)$. The simplicial structure of these latter two spaces is the same as that of $B(q,G)$ and $E(q,G)$ respectively.
We will denote their realization by $BN(G)$ and $EN(G)$. Note that there is a fibration $EN(G)\to BN(G)\to BG$.

The inclusions $F_n/\Gamma^{q+1}_n \subseteq F_n/\Gamma^q_n$ induce the following filtrations:
\[ E(2,G)\subseteq \cdots \subseteq E(q,G)\subseteq E(q+1,G)\subseteq\cdots \subseteq E(\infty,G)=EG ,\]
and
\[ B(2,G)\subseteq \cdots \subseteq B(q,G)\subseteq B(q+1,G)\subseteq\cdots \subseteq B(\infty,G)=BG. \]
Moreover, $B(q,G)\subseteq BN(G)\subset BG$, and $E(q,G)\subseteq EN(G)\subset EG$ for all $q \geq 2$.

\begin{defin} Let $q\ge 2$. We define $\mathcal{N}_q(G)$ as the poset of all nilpotent, proper subgroups of $G$ of class $< q$, ordered by
inclusion. When $q=\infty$ this poset becomes the poset of all subgroups of $G$, 
which we will denote by $\mathcal{L}(G)$.
\end{defin}

Consider the functor $F_q:\mathcal{N}_q(G)\to G$--$Sets$ given by $H\mapsto G/H$. In~\cite{ACT} it was proven that
if $\Gamma^q(G)\neq 1$, then 
$B(q,G)\simeq \hocolim (F_q)_{hG}$, that is,
\[ B(q,G)\simeq \underset{H\in\mathcal{N}_q(G)}\hocolim BH. \]
Likewise we have
\[ NB(G)\simeq \underset{H\in\mathcal{L}(G) -\{ G\}}\hocolim BH \]
The isomorphism  $(\hocolim F_q)_{hG} \cong \hocolim (F_q)_{hG}$ (see~\cite{Dwyer}) shows that 
\[ E(q,G)\simeq \underset{H\in\mathcal{N}_q(G)}\hocolim G/H ,\]
and
\[ EN(G)\simeq \underset{H\in\mathcal{L}(G) -\{ G\}}\hocolim G/H .\]

Suppose that $H$ and $K$ are subgroups of $G$, so one can see that $xH\subseteq yK$ if and only if, $H\subseteq K$ and $xK=yK$. 
This allows us to identify the images of the arrows under the functor $F_q$ with those of the poset of all proper cosets of $G$.
This latter poset is denoted by $\mathcal{C}(G)$.

\begin{defin} Let $q\geq 2$. We define $\mathcal{C}_q(G)$  as the poset of proper cosets of subgroups of $G$ of nilpotency class
less than $q$. That is, $\mathcal{C}_q(G)=\{ xH\in\mathcal{C}(G) |  \Gamma^{q}(H)=1\}$.
When $q=\infty$ this poset becomes the poset of all proper cosets of $G$, that is $\mathcal{C}_\infty(G)=\mathcal{C}(G)$.
\end{defin}

Thus, for finite $q$, we have the equivalences: $E(q,G)\simeq \mathcal{C}_q(G)$ if $\Gamma^q(G)\neq 1$, and  $EN(G)\simeq \mathcal{C}(G)$. 
If $\Gamma^q(G)=1$, then $E(q,G)\simeq EG$. But notice that $EG=E(\infty,G)$ is not, in general, homotopy equivalent to $\mathcal{C}(G)$.
Note that $B(q,G)$ is always connected since $B_0(q,G)=\{ 1\}$. This implies that $E(q,G)$ is always connected. On the other hand, 
it was proven in~\cite{Brown}  that $EN(G)$ is connected if and only if $G$ is not isomorphic to cyclic group of order a prime power. So, 
$G$ is not isomorphic to  a cyclic group of prime power order if and only if, $BN(G)$ is connected.


\section{The coset poset}

We can make our identification of $E(q,G)$ more efficient by considering only the maximal subgroups of $G$ of nilpotency class $<q$.

\begin{defin}
Let $\mathcal{M}_q(G)$ be the poset determined by the maximal nilpotent subgroups of $G$ of class $<q$; that is, any subgroup in
$\mathcal{M}_q(G)$ is an intersection of maximal nilpotent subgroups of $G$ of class $<q$. The corresponding coset poset is
defined as $\mathcal{MC}_q(G)=\{ xH\in\mathcal{C}(G)\ |\ H\in\mathcal{M}_q(G)\}$
\end{defin}

Recall the following Lemma.

\begin{lem}[Quillen, Webb-Thevenaz] Let $f:P\to Q$ be a $G$-map of $G$-posets. If the subposet $f_{\geq y} =\{ x\in P| f(x)\geq y\}$ is
$G_y$-contractible for all $y\in Q$, then $f$ is a homotopy equivalence.
\end{lem}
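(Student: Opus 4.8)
The plan is to deduce this equivariant fiber theorem from the ordinary Quillen Theorem A together with the equivariant Whitehead theorem, reducing the $G$-equivariant statement to a family of non-equivariant statements indexed by the subgroups of $G$. Throughout I identify a poset with the simplicial complex given by its order complex (its chains), so that $P$ and $Q$ become $G$-simplicial complexes and $f$ a simplicial $G$-map; after barycentric subdivision these are $G$-CW complexes, which is the setting in which the equivariant Whitehead theorem applies.

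First I would recall that, for $G$-CW complexes, a $G$-map $f$ is a $G$-homotopy equivalence precisely when the induced map on fixed points $f^H : P^H \to Q^H$ is an ordinary homotopy equivalence for every subgroup $H \le G$. This reduces the problem to analyzing each $f^H$. Since a simplex is fixed setwise by $H$ only if it is fixed pointwise (an order-preserving permutation of a finite chain is the identity), the action is admissible and $|P^H| = |P|^H$; moreover $P^H$ and $Q^H$ are the order complexes of the fixed subposets $\{x \in P : hx = x \text{ for all } h \in H\}$ and its analogue for $Q$. Hence $f^H$ is again a poset map, and I can try to apply Quillen's Theorem A to it.

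The key step is to identify the Quillen fiber of $f^H$ over a point $y \in Q^H$ with the $H$-fixed points of the fiber of $f$ over $y$. Concretely, $(f^H)_{\ge y} = \{x \in P^H : f(x) \ge y\} = (f_{\ge y})^H$, using that $f$ is $G$-equivariant and that $y$ is fixed by $H$. Now the hypothesis enters: because $y \in Q^H$, its stabilizer $G_y$ contains $H$, so $f_{\ge y}$ is $G_y$-contractible and in particular $H$-contractible, whence $(f_{\ge y})^H$ is contractible. Thus every Quillen fiber of $f^H$ is contractible, and the non-equivariant Theorem A (in the $\ge y$ form, obtained from the usual $\le y$ form by passing to opposite posets, which have the same order complex) shows that $f^H$ is a homotopy equivalence. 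As this holds for all $H \le G$, the equivariant Whitehead theorem yields that $f$ is a $G$-homotopy equivalence, and in particular a homotopy equivalence.

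I expect the main obstacle to be foundational rather than conceptual: one must be careful that ``$G_y$-contractible'' is interpreted as having all fixed-point sets $(f_{\ge y})^H$ contractible for $H \le G_y$ (the correct notion for $G$-CW complexes), and that the subdivision and $G$-CW structure are arranged so that both the equivariant Whitehead theorem and the identification $(f^H)_{\ge y} = (f_{\ge y})^H$ are legitimate. Once these bookkeeping points are in place, the argument is a clean reduction to Quillen's classical fiber lemma applied one subgroup at a time.
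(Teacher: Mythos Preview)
The paper does not supply a proof of this lemma; it is quoted as a known result attributed to Quillen and Webb--Th\'evenaz and then applied to compare $\mathcal{MC}_q(G)$ with $\mathcal{C}_q(G)$. Your argument is essentially the standard one underlying the Webb--Th\'evenaz version: pass to fixed-point subposets $P^H\to Q^H$, identify the fiber $(f^H)_{\ge y}$ with $(f_{\ge y})^H$, use $H\le G_y$ to conclude this is contractible, invoke the non-equivariant Quillen Theorem~A for each $H$, and assemble the conclusions via the equivariant Whitehead theorem. This is correct, and your remarks on admissibility (an order-preserving permutation of a chain is the identity) and on the meaning of ``$G_y$-contractible'' are exactly the points one must check. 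Note also that, as you observe at the end, the lemma as stated in the paper asserts only an ordinary homotopy equivalence; for that weaker conclusion it suffices to take $H=1$ and apply Quillen's Theorem~A once, since $G_y$-contractible certainly implies contractible---the full equivariant machinery is only needed if one wants a $G$-homotopy equivalence.
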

\begin{rem} The group $G$ acts on the coset poset in two ways: (1) by translation, and (2) by conjugation $g(xH)g^{-1}= gxg^{-1}(gHg^{-1})$.
We also have the action of  $Hol(G)=G \times_\tau Aut(G)$ given by $(g\sigma)\cdot xH = g\sigma(x)\sigma(H)$.
\end{rem}

Note that if $f:\mathcal{MC}_q(G)\to \mathcal{C}_q(G)$ denotes the inclusion map, 
then for $xH\in \mathcal{C}_q(G)$ the poset $f_{\geq xH}$ has an initial element, namely the coset $xM_H$, where $M_H$ is the intersection 
of all the subgroups in $\mathcal{M}_q(G)$ containing $H$. Moreover, one can see that $xM_H$ is fixed by $G_{xH}$ (under both the translation
and conjugation action). 

We also have ($G$-equivariant) inclusions $\mathcal{C}_2(G)\subseteq \mathcal{C}_3(G)\subseteq \cdots \subseteq \mathcal{C}_q(G)\subseteq \cdots \subseteq \mathcal{C}(G)$.

\begin{prop} For any finite group $G$ such that $\Gamma^q(G)\neq 1$, there are homotopy equivalences 
\[ E(q,G)\simeq \mathcal{MC}_q(G) \]
and
\[ B(q,G) \simeq \underset{A\in\mathcal{M}_q(G)}\hocolim\ G/A  .\]
\end{prop}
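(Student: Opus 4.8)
The plan is to deduce both equivalences by cutting the homotopy-colimit descriptions over the full poset $\mathcal{N}_q(G)$ down to the subposet $\mathcal{M}_q(G)$ of maximal (and intersections of maximal) nilpotent subgroups of class $<q$, and to recognize the coset poset $\mathcal{MC}_q(G)$ as the transport category (category of elements) of the coset-valued diagram $A\mapsto G/A$. Two ingredients drive everything: the Quillen--Webb--Thevenaz Lemma above, and a cofinality argument for the inclusion $\mathcal{M}_q(G)\hookrightarrow\mathcal{N}_q(G)$.

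For the first equivalence I would start from the equivalence $E(q,G)\simeq\mathcal{C}_q(G)$ recorded in Section~2 (valid since $\Gamma^q(G)\neq1$) and compare $\mathcal{C}_q(G)$ with $\mathcal{MC}_q(G)$ using the Lemma. Apply it to the inclusion $f:\mathcal{MC}_q(G)\to\mathcal{C}_q(G)$, which is $G$-equivariant for the translation action. As observed in the Remark preceding the Proposition, for each coset $xH\in\mathcal{C}_q(G)$ the fiber $f_{\geq xH}$ has the initial element $xM_H$, where $M_H$ is the intersection of all members of $\mathcal{M}_q(G)$ containing $H$; an initial element makes the nerve a cone, hence contractible, and since $xM_H$ is fixed by the stabilizer $G_{xH}$ the fiber is in fact $G_{xH}$-contractible. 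The Lemma then gives $\mathcal{MC}_q(G)\simeq\mathcal{C}_q(G)\simeq E(q,G)$.

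For the second equivalence the engine is the cofinality of the inclusion $i:\mathcal{M}_q(G)\hookrightarrow\mathcal{N}_q(G)$. For each $H\in\mathcal{N}_q(G)$ the comma category $H\downarrow i=\{A\in\mathcal{M}_q(G):H\subseteq A\}$ has the minimum $M_H$, so its nerve is contractible; by the cofinality theorem for homotopy colimits (the homotopy-theoretic refinement of Quillen's Theorem A) restricting any diagram along $i$ preserves its homotopy colimit. Feeding in the coset-valued functor $A\mapsto G/A$, together with the identification of its transport category with the coset poset $\mathcal{MC}_q(G)$---the homotopy colimit of a diagram of discrete sets being the nerve of its category of elements---produces the homotopy colimit $\underset{A\in\mathcal{M}_q(G)}\hocolim\,G/A$ displayed in the statement, which I then match to the left-hand side through the homotopy-colimit descriptions of Section~2. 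I expect the main obstacle to lie exactly in this last matching: the coset diagram $A\mapsto G/A$ naturally computes the total-space side $E(q,G)$, so landing on the base $B(q,G)$ forces a passage along the $G$-bundle $E(q,G)\to B(q,G)$ via the homotopy-orbit construction $(-)_{hG}$, and one must verify that this passage is compatible, $G$-equivariantly, both with the cofinality reduction and with the transport-category identification.
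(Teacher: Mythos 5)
Your proof of the first equivalence is exactly the paper's: the Proposition carries no separate proof, and the sentences immediately preceding it --- the inclusion $f\colon\mathcal{MC}_q(G)\to\mathcal{C}_q(G)$ has fibers $f_{\geq xH}$ with initial element $xM_H$ fixed by $G_{xH}$, hence $G_{xH}$-contractible, so the Quillen--Webb--Thevenaz Lemma applies, and $E(q,G)\simeq\mathcal{C}_q(G)\simeq\mathcal{MC}_q(G)$ --- are precisely your argument. For the second equivalence, your cofinality argument is not really a different mechanism: the fiber $f_{\geq xH}$ is isomorphic as a poset to the undercategory $H\downarrow i=\{A\in\mathcal{M}_q(G)\colon H\subseteq A\}$ (send $yA\supseteq xH$ to $A$), so the hypothesis you verify is the same combinatorial fact the paper checks. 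The genuinely valuable part of your proposal is the ``obstacle'' you flag at the end, and your diagnosis is correct: read literally, $\hocolim_{A\in\mathcal{M}_q(G)}G/A$ is the nerve of the transport category of the coset diagram, i.e.\ $\mathcal{MC}_q(G)$ itself, which by the first equivalence is $E(q,G)$, not $B(q,G)$; these genuinely differ, e.g.\ for $G=\Sigma_3$, $q=2$, one is a wedge of $8$ circles while the other is $B\Z/2\vee B\Z/2\vee B\Z/2\vee B\Z/3$. So the displayed formula must be read with the convention of Section 2, where $B(q,G)\simeq\hocolim\,(F_q)_{hG}$: the diagram is $A\mapsto (G/A)_{hG}=BA$. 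With that reading, the cleanest way to close is not the equivariant bookkeeping you anticipate, but a direct application of your own cofinality statement to the space-valued diagram $H\mapsto BH$ on $\mathcal{N}_q(G)$: it gives $\hocolim_{A\in\mathcal{M}_q(G)}BA\simeq\hocolim_{H\in\mathcal{N}_q(G)}BH\simeq B(q,G)$, the last equivalence being the theorem of~\cite{ACT} quoted in Section 2, with no passage through $(-)_{hG}$ needed. (Your route also works: the compatibility you ask for is exactly Dwyer's isomorphism $(\hocolim F_q)_{hG}\cong\hocolim\,(F_q)_{hG}$ already invoked in the paper, together with the standard fact that a $G$-map that is a nonequivariant weak equivalence induces an equivalence on Borel constructions.)
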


\medskip
We will now focus on computing the Euler characteristic of $E(q,G)$. For this purpose we will use M\"obius functions. 

\begin{defin} If $\mathcal{X}$ is a finite poset, then the M\"obius funtion of
this poset is a function $\mu_\mathcal{X}:\mathcal{X}\times\mathcal{X} \to \Z$ determined recursively by the following equations
\[ \mu_\mathcal{X}(a,a)=1, \]
and for $a<b$
\[ \mu_\mathcal{X}(a,b) = - \sum_{a\leq u< b} \mu_\mathcal{X}(a,u) .\]
\end{defin}

Let $\mathcal{X}$ be a subposet of the poset of all subgroups of $G$, and assume that $G$ is in $\mathcal{X}$. Then Hall showed that
$\mu_\mathcal{X}(H,G)$ is computed by a signed sum of the number of chains of subgroups in $\mathcal{X}$ from $H$ to $G$. 
We say that $H=K_0<K_1<\cdots<K_n=G$ has length $n$ and it is counted with the sign $(-1)^n$. Now, (following Bouc) note that a chain
of the form $H=K_0<K_1<\cdots<K_n=G$ corresponds to $|G:H|$ chains of cosets $C_0<C_1<\cdots <C_{n-1}<G$, where $C_0=xH$ and
$x$ is a representative of the $|G:H|$ cosets of $H$ in $G$.  
As the chains $C_0<C_1<\cdots <C_{n-1}$ form the simplices of the simplicial complex afforded by $\mathcal{C}_\mathcal{X}$, we see that
\[ 1-\chi(\mathcal{C}_\mathcal{X})=  \sum_{H\leq G} \mu_\mathcal{X}(H,G)|G:H|, \]
and thus 
\[ \chi(\mathcal{C}_\mathcal{X})= - \sum_{H\in\mathcal{X}-\{ G\} } \mu_\mathcal{X}(H,G)|G:H|. \]
 
\begin{defin} Let $q\geq 2$. We define $\mathcal{L}_q(G)$ as the poset $\mathcal{N}_q(G)\cup \{ G\}$, and $\mu_q$ as the
M\"obius function of the poset $\mathcal{L}_q(G)$.
\end{defin} 

\begin{rem} From now on we will assume that $\Gamma^q(G)\neq 1$ (in particular $q$ has to be finite).
\end{rem}

\begin{prop} Let $G$ be a finite group. Then
\[\chi(E(q,G)) = 1- \sum_{H\in\mathcal{L}_q(G)} \mu_q(H,G) |G:H| = -\sum_{H\in\mathcal{M}_q(G)} \mu_q(H,G) |G:H| .\]
\end{prop}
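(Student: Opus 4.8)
The plan is to combine the homotopy equivalence $E(q,G)\simeq\mathcal{C}_q(G)$ with the coset-poset Euler characteristic formula derived above, and then to discard the redundant terms by a M\"obius-function vanishing argument.

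First I would establish the equality $\chi(E(q,G))=1-\sum_{H\in\mathcal{L}_q(G)}\mu_q(H,G)|G:H|$. Since the Euler characteristic is a homotopy invariant and $E(q,G)\simeq\mathcal{C}_q(G)$, it is enough to compute $\chi(\mathcal{C}_q(G))$. Now $\mathcal{C}_q(G)$ is precisely the coset poset attached to the subgroup poset $\mathcal{X}=\mathcal{L}_q(G)=\mathcal{N}_q(G)\cup\{G\}$, since a proper coset $xH$ belongs to $\mathcal{C}_q(G)$ exactly when $H$ is a proper subgroup with $\Gamma^q(H)=1$, i.e. $H\in\mathcal{N}_q(G)$. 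Applying the formula $\chi(\mathcal{C}_{\mathcal{X}})=-\sum_{H\in\mathcal{X}-\{G\}}\mu_{\mathcal{X}}(H,G)|G:H|$ with $\mathcal{X}=\mathcal{L}_q(G)$, and using $\mathcal{L}_q(G)-\{G\}=\mathcal{N}_q(G)$, gives
\[ \chi(E(q,G))=-\sum_{H\in\mathcal{N}_q(G)}\mu_q(H,G)|G:H|. \]
Splitting off the term indexed by $H=G$, which contributes $\mu_q(G,G)|G:G|=1$, turns the right-hand side into $1-\sum_{H\in\mathcal{L}_q(G)}\mu_q(H,G)|G:H|$, which is the first asserted equality.

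For the second equality I would show that every summand indexed by $H\in\mathcal{N}_q(G)\setminus\mathcal{M}_q(G)$ is zero. The key structural point is that $\mathcal{L}_q(G)$ is a lattice: it is closed under intersection, because a subgroup of a nilpotent group of class $c$ is again nilpotent of class at most $c$, so $H\cap K\in\mathcal{N}_q(G)$ whenever $H,K\in\mathcal{N}_q(G)$; being a finite meet-semilattice with greatest element $G$, it is therefore a lattice whose meet operation is intersection. Its coatoms are exactly the maximal nilpotent subgroups of class $<q$, and by definition the elements of $\mathcal{M}_q(G)$ are the meets of families of coatoms. I would then invoke the classical fact that in a finite lattice $L$ with maximum $\hat 1$ one has $\mu_L(H,\hat 1)=0$ whenever $H$ is not a meet of coatoms. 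Taking $L=\mathcal{L}_q(G)$, this yields $\mu_q(H,G)=0$ for every $H\in\mathcal{N}_q(G)$ that is not an intersection of maximal nilpotent subgroups of class $<q$, i.e. for every $H\in\mathcal{N}_q(G)\setminus\mathcal{M}_q(G)$. Consequently
\[ \sum_{H\in\mathcal{N}_q(G)}\mu_q(H,G)|G:H|=\sum_{H\in\mathcal{M}_q(G)}\mu_q(H,G)|G:H|, \]
and substituting this into the displayed expression for $\chi(E(q,G))$ gives $\chi(E(q,G))=-\sum_{H\in\mathcal{M}_q(G)}\mu_q(H,G)|G:H|$, as required.

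The reduction of $\mathcal{C}_q(G)$ to the subgroup poset and the bookkeeping of the $H=G$ term are routine; the \emph{main obstacle} is the M\"obius vanishing. To keep the argument self-contained one can prove it by applying the dual of Rota's crosscut theorem inside the interval $[H,G]$: the coatoms of this interval are the maximal nilpotent subgroups of class $<q$ containing $H$, and if their intersection strictly contains $H$ then no subfamily of them has intersection equal to $H$, so every term of the crosscut expansion of $\mu_q(H,G)$ vanishes. One should also check that $\mathcal{M}_q(G)$, defined as the set of intersections of maximal subgroups, coincides with the meet-closure of the coatoms of $\mathcal{L}_q(G)$, and that throughout the relevant M\"obius function is that of $\mathcal{L}_q(G)$ and not of some sublattice.
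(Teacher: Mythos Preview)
Your proposal is correct and follows essentially the same approach as the paper: combine the homotopy equivalence $E(q,G)\simeq\mathcal{C}_q(G)$ with the general coset-poset formula $\chi(\mathcal{C}_{\mathcal{X}})=-\sum_{H\in\mathcal{X}-\{G\}}\mu_{\mathcal{X}}(H,G)\,|G:H|$ applied to $\mathcal{X}=\mathcal{L}_q(G)$, and then restrict the sum to $\mathcal{M}_q(G)$ via M\"obius vanishing. The only difference is in the justification of this last step: where you supply a self-contained lattice/crosscut argument, the paper simply invokes Hall's Theorem~2.3, and in the remark following the proposition also points out the alternative topological route via the equivalence $E(q,G)\simeq\mathcal{MC}_q(G)$ established earlier.
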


\begin{rem} In the formula above the sum can run either over $\mathcal{M}_q(G)$ or $\mathcal{N}_q(G)$. There is a topological reason
for this as we have seen, but there is a combinatorial one which follows from the fact that if $\mathcal{X}$ is a suposet of the poset of all subgroups of 
$G$ (assume $G\in \mathcal{X})$, then $\mu_\mathcal{X}(H,G)=0$ if $H$ is not an intersection of maximal subgroups in $\mathcal{X}$ 
(see Theorem 2.3 in~\cite{Hall}).
Another important property that facilitates the computation of the M\"obius function is that if $\mathcal{X}$ is invariant under $Aut(G)$, then so is 
$\mu_\mathcal{X}$. 
\end{rem}

\begin{exm} A finite group $G$ is said to be TC-group if $[g,h]=1=[h,k]$ implies $[g,k]=1$ for all non-central elements. The poset of maximal abelian
subgroups of a TC-group $G$ consists of the maximal abelian subgroups and the center of $G$ which is the intersection of any pair of distinct maximal
abelian subgroups of $G$. Then we obtain the formula
\[ 1- \chi(E(2,G)) = 1 - |G:Z(G)| +\sum_{1\leq i\leq N}(|G:Z(G)| -|G:M_i|) \]
where $M_1,\ldots,M_N$ are the maximal abelian subgroups of $G$ (see~\cite{ACT}). The family of TC-groups includes groups such as 
any non-abelian group of order less than 24, dihedral groups, quaternion groups, and $SL(2,\F_{2^n})$ with $n\geq 2$. See~\cite{Sc}, p.~519, for
a classification of these groups. When $G$ is a TC-group it turns out that $E(2,G)$ has the homotopy type of a wedge of $1-\chi(E(2,G))$ circles 
(see~\cite{ACT}).
\end{exm}

\section{Probability}

P.~Hall defined $\phi(G,s)$ as the number of ordered $s$-tuples $(g_1,\ldots,g_s)\in G^s$ such that $\langle g_1,\ldots, g_s\rangle=G$. 
Thus the probability that a randomly chosen $s$-tuple generates $G$ is given by 
\[ P(G,s)=\frac{\phi(G,s)}{|G|^s} \]

Analogously we can define $P_q(G,s)$ as the probability that a randomly chosen ordered $s$-tuple generates a nilpotent subgroup of 
class less than $q$. Of course we have
\[ P_q(G,s)=\frac{ |Hom(F_s/\Gamma^q,G)|}{|G|^s} \]
since $Hom(F_s/\Gamma^q,G)=\{ (g_1,\ldots,g_s)\in G^s| \Gamma^q(\langle g_1,\ldots,g_s\rangle )=1\}$.

\begin{lem} Let $\mu$ be the M\"obius function of a poset $\mathcal{X}$, and define the function $\tilde{\mu}$ inductively by 
\[ \tilde\mu (a,a)=1 \]
\[ \tilde\mu (a,b) = - \sum_{a< u\leq b} \mu(u,b) .\]
Then $\mu=\tilde\mu$.
\end{lem}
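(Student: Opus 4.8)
The plan is to recognize that the lemma asserts the left--right symmetry of the Möbius function. The Definition above pins $\mu$ down by the recursion $\mu(a,b) = -\sum_{a \le u < b}\mu(a,u)$ in the \emph{first} variable, whereas $\tilde\mu$ is assembled from a sum that runs in the \emph{second} variable. So the content to be extracted is precisely the dual identity
\[ \sum_{a \le u \le b}\mu(u,b) = 0 \quad\text{for } a < b, \]
since this rearranges to $\mu(a,b) = -\sum_{a < u \le b}\mu(u,b) = \tilde\mu(a,b)$, while the case $a=b$ is immediate from the two normalizations $\mu(a,a) = 1 = \tilde\mu(a,a)$.

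To prove the dual identity I would use Hall's chain description of $\mu$ already recorded in Section~3, namely that $\mu(a,b)$ is the signed count $\sum (-1)^{n}$ over strictly increasing chains $a = x_0 < x_1 < \cdots < x_n = b$; although that passage is phrased for the top element $G$, the very same statement holds verbatim for any interval $[a,b]$, and the sum is finite because the poset is finite. If I wanted the step to be self-contained I would note that this signed chain count satisfies the defining first-variable recursion (group chains by their penultimate element $x_{n-1}=u$ with $a \le u < b$ and use $(-1)^{n} = -(-1)^{n-1}$, with the empty chain giving the value $1$ at $a=b$), so by uniqueness of the solution of the recursion it equals $\mu$.

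With the chain formula in hand, the dual identity falls out by the symmetric \emph{bottom-up} grouping. I would classify each nontrivial chain from $a$ to $b$ by its \emph{second} element $x_1 = u$, which ranges over $a < u \le b$, the extreme case $u = b$ being the length-one chain $a < b$. The tail $u = x_1 < x_2 < \cdots < x_n = b$ is an arbitrary chain from $u$ to $b$, and the sign obeys $(-1)^n = -(-1)^{n-1}$ with $n-1$ the length of the tail. Summing over all chains then gives
\[ \mu(a,b) = -\sum_{a < u \le b}\mu(u,b), \]
which is exactly $\tilde\mu(a,b)$, completing the proof.

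The only genuine obstacle is conceptual rather than computational: one must see that the given Definition fixes $\mu$ through a one-sided (first-variable) recursion, so the second-variable identity is not a tautology but encodes the invariance of $\mu$ under reversing chains; establishing the chain formula is the device that makes this symmetry visible. Equivalently, and perhaps more structurally, the argument can be phrased in the incidence algebra: the defining recursion says $\mu * \zeta = \delta$, and since the strict part of $\zeta$ is nilpotent on a finite poset the element $\zeta$ is a unit, so $\mu$ is its \emph{two-sided} inverse and $\zeta * \mu = \delta$ yields the dual identity at once. I would mention this as the cleaner alternative but carry out the chain computation, since it stays within the combinatorial language already in use in Section~3.
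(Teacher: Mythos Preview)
The paper states this lemma without proof, treating it as a standard fact about M\"obius functions on finite posets. Your argument is correct: both the chain-counting derivation of the dual identity $\sum_{a\le u\le b}\mu(u,b)=0$ for $a<b$ (grouping chains by the second element $x_1$ rather than by the penultimate element $x_{n-1}$) and the incidence-algebra observation that a one-sided inverse of $\zeta$ is automatically two-sided because the strict part of $\zeta$ is nilpotent on a finite poset are valid and classical. Either one supplies the proof the paper omits; nothing further is needed.
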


\begin{prop}\label{P-as-Dirichlet} Let $\mu_q$ be the M\"obius function of the poset $\mathcal{L}_q(G)=\mathcal{N}_q(G)\cup\{ G\}$. Then
\[ P_q(G,s)=-\sum_{A\in \mathcal{N}_q(G)} \frac{ \mu_q(A,G)}{|G:A|^s} \]
\end{prop}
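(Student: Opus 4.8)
The plan is to run a M\"obius-inversion argument over $\mathcal{L}_q(G)$, parallel to Hall's original derivation of $\phi(G,s)=\sum_{H\le G}\mu(H,G)|H|^s$, but taking care of the fact that $\mathcal{L}_q(G)$ omits the proper subgroups of $G$ that are \emph{not} nilpotent of class $<q$.

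First I would rewrite the numerator of $P_q(G,s)$ as a sum over $\mathcal{N}_q(G)$. For a subgroup $A\le G$ let $\phi(A,s)$ denote the number of ordered $s$-tuples in $A$ that generate $A$. Since every $s$-tuple of $G$ generates a unique subgroup, we have the basic counting identity $|A|^s=\sum_{L\le A}\phi(L,s)$ for every $A$. Because $\Gamma^q(G)\neq 1$, the group $G$ itself is not nilpotent of class $<q$, so a tuple lies in $Hom(F_s/\Gamma^q,G)$ exactly when it generates a \emph{proper} subgroup of class $<q$, i.e.\ a member of $\mathcal{N}_q(G)$; hence $|Hom(F_s/\Gamma^q,G)|=\sum_{A\in\mathcal{N}_q(G)}\phi(A,s)=:N$, and it suffices to prove $N=-\sum_{A\in\mathcal{N}_q(G)}\mu_q(A,G)|A|^s$.

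The key structural remark is that $\mathcal{N}_q(G)$ is closed downward: every subgroup of a nilpotent group of class $c$ is nilpotent of class $\le c$, so if $A\in\mathcal{N}_q(G)$ then the interval $[1,A]$ inside $\mathcal{L}_q(G)$ is precisely the full subgroup lattice of $A$. Consequently the identity $|A|^s=\sum_{L\le A,\,L\in\mathcal{L}_q(G)}\phi(L,s)$ holds verbatim for every $A\in\mathcal{N}_q(G)$. To make the same down-set identity valid at the top element I would set $\psi(A)=\phi(A,s)$ for $A\in\mathcal{N}_q(G)$ and $\psi(G)=|G|^s-N$, the number of tuples that do \emph{not} generate a class-$<q$ subgroup; then one checks directly that $|A|^s=\sum_{L\le A,\,L\in\mathcal{L}_q(G)}\psi(L)$ holds for \emph{all} $A\in\mathcal{L}_q(G)$, including $A=G$.

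With this identity in hand I would invert in $\mathcal{L}_q(G)$. Multiplying by $\mu_q(A,G)$, summing over $A\in\mathcal{L}_q(G)$, interchanging the order of summation, and invoking the orthogonality relation $\sum_{L\le A\le G}\mu_q(A,G)=\delta_{L,G}$—which is exactly the content of the preceding Lemma identifying $\mu_q$ with the upper recursion $\tilde\mu$—collapses the double sum to $\psi(G)=|G|^s-N$. Since $\mu_q(G,G)=1$, this reads $|G|^s+\sum_{A\in\mathcal{N}_q(G)}\mu_q(A,G)|A|^s=|G|^s-N$, which rearranges to $N=-\sum_{A\in\mathcal{N}_q(G)}\mu_q(A,G)|A|^s$; dividing by $|G|^s$ and writing $|A|^s/|G|^s=1/|G:A|^s$ yields the claim. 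I expect the only genuinely delicate step to be the correct bookkeeping at the top element: the naive down-set sum fails at $G$ precisely because $\mathcal{L}_q(G)$ discards the proper subgroups that are not of class $<q$, and the whole argument hinges on absorbing exactly those missing tuples into the single auxiliary value $\psi(G)$.
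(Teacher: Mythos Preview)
Your argument is correct and is essentially the paper's proof, just organized via the auxiliary function $\psi$ rather than by working directly with tuples. The paper expands $|A|^s$ as a sum over tuples in $A$, interchanges the order of summation, and applies the same Lemma (the identity $\sum_{L\le A\le G}\mu_q(A,G)=\delta_{L,G}$) to the inner sum; grouping the tuples by the subgroup they generate recovers exactly your computation, so the two presentations differ only cosmetically.
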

\begin{proof}
Note that 
\[ \sum_{A\in \mathcal{N}_q(G)} \mu_q(A,G) |A|^s = \sum_{A\in \mathcal{N}_q(G)} \sum_{g_1,\ldots,g_s \in A} \mu_q(A,G) .\]
If we interchange the sums we get
\[ \sum_{A\in \mathcal{N}_q(G)} \mu_q(A,G) |A|^s =  \sum_{(g_1,\ldots,g_s)\in Hom(F_s/\Gamma^q_s,G)} \sum_{\langle g_1,\ldots,g_s \rangle \subseteq A} \mu_q(A,G) ,\]
where the inner sum runs over all $A\in\mathcal{N}_q(G)$ containing $\langle g_1,\ldots , g_s\rangle$. 
Recall that $\mu_q(G,G)=1$, thus 
\[ \sum_{A\in \mathcal{N}_q(G)} \mu_q(A,G) |A|^s =  \sum_{(g_1,\ldots,g_s)\in Hom(F_s/\Gamma^q_s,G)} \left( 
-1+\sum_{\langle g_1,\ldots,g_s \rangle \subseteq A\subseteq G} \mu_q(A,G) \right). \]
By the previous Lemma the inner sum is zero. Therefore
\[ \sum_{A\in \mathcal{N}_q(G)} \mu_q(A,G) |A|^s = \sum_{(g_1,\ldots,g_s)\in Hom(F_s/\Gamma^q_s,G)} (-1) = -|Hom(F_s/\Gamma^q_s,G)|. \]
The result follows. 
\end{proof}

\begin{rem} The previous result allows to regard $P_q(G,s)$ as a function defined over the complex numbers. 
The reciprocal function $P_q(G,s)$ may be called the probabilistic zeta function of $G$ of class $q$.
\end{rem}

\begin{exm}\label{example-series} If $G$ is a TC-group with maximal abelian subgroups $M_1,\ldots,M_N$, then we have
\[ P_2(G,s)=\frac{1-N}{|G:Z(G)|^s}+\sum_{i=1}^{N}\frac{1}{|G:M_i|^s} .\]
We have the following computations:
\begin{enumerate}
\item \[ P_2(\Sigma_3,s)=\frac{1}{2^s} +\frac{3}{3^s} -\frac{3}{6^s}.\]
\item If $D_{2n}$ is the dihedral group of order $2n$ and $n=2k\geq 4$, then
\[ P_2(D_{2n},s)=\frac{1}{2^s}+\frac{k}{k^s}-\frac{k}{n^s}. \]
\item \[ P_2(A_4,s)=\frac{1}{3^s} +\frac{4}{4^s} - \frac{4}{12^s}. \]
\item \[ P_2(A_5,s)=\frac{6}{12^s}+\frac{5}{15^s}+\frac{10}{20^s}-\frac{20}{60^s} .\]
Whereas from~\cite{Boston} we have
\[ P(A_5,s)=1-\frac{5}{5^s}-\frac{6}{6^s}-\frac{10}{10^s}+\frac{20}{20^s}+\frac{60}{30^s}-\frac{60}{60^s}. \]
\item \[ P_2(\Sigma_4,s)=\frac{7}{6^s} +\frac{4}{8^s}-\frac{6}{12^s}-\frac{4}{24^s} .\]
\item \[P_2(PSL(2,7),s)=\frac{8}{24^s}+\frac{35}{42^s}+\frac{28}{56^s}-\frac{42}{84^s}-\frac{28}{168^s}. \]
\item \[ P_3(\Sigma_4,s)=\frac{3}{3^s}-\frac{2}{6^s}+\frac{4}{8^s}-\frac{4}{24^s}.\]

\end{enumerate}
\end{exm}

\begin{prop} If $q\geq 2$, then $\chi(E(q,G))=P_q(G,-1)$.
\end{prop}

\begin{cor} Let $m_q(G)$ be the greatest common divisor of the indices of the maximal subgroups in $\mathcal{N}_q(G)$.
Then $m_q(G)$ divides $\chi(E(q,G))$. 
\end{cor}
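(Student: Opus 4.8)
The plan is to extract the divisibility directly from the M\"obius-theoretic expression for $\chi(E(q,G))$ proved in the preceding Proposition. Recall that
\[ \chi(E(q,G)) = -\sum_{H\in\mathcal{M}_q(G)} \mu_q(H,G)\,|G:H|. \]
Since $m_q(G)$ divides any integer linear combination of the indices $|G:H|$ once it divides each of those indices, it suffices to prove that $m_q(G)\mid |G:H|$ for every $H\in\mathcal{M}_q(G)$; the coefficients $\mu_q(H,G)$ then play no role beyond being integers.

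The key step is a short containment argument. Each $H\in\mathcal{M}_q(G)$ is by definition an intersection of maximal nilpotent subgroups of class $<q$, hence is contained in at least one such maximal subgroup $M$. (In fact finiteness of $G$ alone guarantees that any proper nilpotent subgroup of class $<q$ lies below a maximal element of the poset $\mathcal{N}_q(G)$, so the identical reasoning applies to the sum written over $\mathcal{N}_q(G)$ via the first equality of the Proposition, without even invoking the vanishing of $\mu_q(H,G)$ off $\mathcal{M}_q(G)$.) From $H\subseteq M$ one gets $|G:H|=|G:M|\cdot|M:H|$, so $|G:M|$ divides $|G:H|$. Writing $M_1,\ldots,M_N$ for the maximal subgroups of $\mathcal{N}_q(G)$ and $m_q(G)=\gcd_i |G:M_i|$, we have $m_q(G)\mid |G:M|\mid |G:H|$. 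Thus every term of the sum is divisible by $m_q(G)$, and therefore so is $\chi(E(q,G))$.

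I expect no serious obstacle, as the argument is purely arithmetic; the only points demanding care are degenerate ones. First, one must check that $G\notin\mathcal{M}_q(G)$, so that each index $|G:H|$ is a genuine proper index divisible by some $|G:M|$; this is exactly the standing assumption $\Gamma^q(G)\neq 1$, which forces $G\notin\mathcal{N}_q(G)$. Second, the gcd defining $m_q(G)$ is taken over a nonempty finite set precisely because $G$ is finite, so that maximal elements of $\mathcal{N}_q(G)$ exist and every $H$ embeds in one. Once the phrase ``maximal subgroup in $\mathcal{N}_q(G)$'' is read as ``maximal element of the poset $\mathcal{N}_q(G)$,'' the containment, and hence the divisibility, are immediate.
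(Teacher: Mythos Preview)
Your argument is correct and is precisely the intended one: the paper states the corollary without proof immediately after the formula $\chi(E(q,G))=P_q(G,-1)=-\sum_{H}\mu_q(H,G)\,|G:H|$, and the divisibility follows because every $H\in\mathcal{N}_q(G)$ sits below some maximal element $M$, whence $m_q(G)\mid |G:M|\mid |G:H|$. Your remarks about the standing hypothesis $\Gamma^q(G)\neq 1$ and the nonemptiness of the set of maximal elements are the right sanity checks.
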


\begin{exm} If $G$ is a non-abelian $p$-group, then $\chi(E(2,G))$ is divisible by $p^\alpha$ where $p^\alpha$ is
the index of a maximal abelian subgroup of $G$.
\end{exm}

Lets consider the simple group $PSL(2,p)$ where $p$ is a prime number greater than 3. The order of $PSL(2,p)$ is 
$2pqr$ where $q=\frac{1}{2}(p-1)$ and $r=\frac{1}{2}(p+1)$. The numbers $p,q,r$ are relatively prime. The maximal
abelian subgroups of $PSL(2,p)$ are $\Z/p,\Z/q,\Z/r$ and $D_4$, where $D_{2n}$ is the dihedral group of order $2n$.


\begin{prop} Let $p$ be a prime number greater than 3. Then
\[ P_2(PSL(2,p),s) = \frac{p+1}{(2qr)^s} +  \frac{pr}{(2pr)^s} + \frac{pq}{(2pq)^s}  + \frac{\frac{1}{6}pqr}{(\frac{1}{2}pqr)^s } 
- \frac{ \frac{1}{2}pqr}{(pqr)^s}  - \frac{ p+pr+pq-\frac{pqr}{3}}{ (2pqr)^s }    .\]

\end{prop}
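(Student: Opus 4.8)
The plan is to apply Proposition~\ref{P-as-Dirichlet}, which writes $P_2(PSL(2,p),s)=-\sum_{A}\mu_2(A,G)/|G:A|^s$ with $A$ ranging over the proper abelian subgroups of $G=PSL(2,p)$. By the Remark following that proposition (Hall's theorem, \cite{Hall}), $\mu_2(A,G)=0$ unless $A$ is an intersection of maximal abelian subgroups, so only finitely many conjugacy types contribute and it suffices to understand the poset $\mathcal{M}_2(G)$. The maximal abelian subgroups are the four families $\Z/p$, $\Z/q$, $\Z/r$ and the Klein four groups $D_4$ recorded above; each is a maximal element of $\mathcal{L}_2(G)$ below $G$, so $\mu_2(A,G)=-1$ for every such $A$. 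Since $\mu_2$ is $\mathrm{Aut}(G)$-invariant it is constant on conjugacy classes, whence each family contributes $-\mu_2\cdot(\#\text{conjugates})/|G:A|^s$. Thus the proof reduces to three tasks: count the conjugates of each maximal abelian subgroup, identify all of their intersections, and compute $\mu_2$ on those intersections.

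For the counts I would compute normalizers. A Sylow $p$-subgroup $\Z/p$ is self-centralizing with normalizer the Borel subgroup of order $pq$, giving $p+1$ conjugates and index $2qr$; the split and non-split tori $\Z/q,\Z/r$ are self-centralizing with dihedral normalizers $D_{2q},D_{2r}$, giving $pr$ and $pq$ conjugates and indices $2pr,2pq$. For the intersections, every abelian subgroup of $PSL(2,p)$ is cyclic or a copy of $D_4$ (the Sylow $2$-subgroups being dihedral), and every cyclic subgroup of order $>2$ lies in a \emph{unique} maximal torus; hence such a subgroup is never a proper intersection of distinct maximal abelians, so its $\mu_2$ vanishes. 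The only nontrivial proper intersections are therefore the involution subgroups $\Z/2$ (realized both as $D_4\cap D_4'$ and as torus${}\cap D_4$) and the trivial subgroup. This already isolates exactly the six indices $2qr,2pr,2pq,\tfrac12 pqr,pqr,2pqr$ appearing in the claim.

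The crux is the computation on these intersections, and the structure of the centralizer $C_G(t)$ of an involution $t$ carries it through. There is a single class of involutions, and $C_G(t)$ is dihedral of order $p-1$ when $p\equiv 1\pmod 4$ and of order $p+1$ when $p\equiv 3\pmod 4$; write it uniformly as $D_{2m}$ with $m\in\{q,r\}$, so the number of involutions is $|G|/|C_G(t)|=pqr/m$. Inside $D_{2m}$ the abelian subgroups containing the central involution $t$ are precisely the torus subgroups $\Z/d$ with $2\mid d\mid m$ together with the $m/2$ Klein fours $\langle t,\rho\rangle$ for $\rho$ a reflection. Applying the defining recursion for $\mu_2$, the cyclic contributions telescope: substituting $d=2e$ gives $\sum_{2\mid d\mid m}\mu_2(\Z/2,\Z/d)=\sum_{e\mid (m/2)}\mu_{\Z}(e)=0$ for $m>2$, where $\mu_{\Z}$ is the classical number-theoretic Möbius function; only the $m/2$ Klein fours survive, each contributing $-1$, so $\mu_2(\Z/2,G)=m/2$. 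The same incidence count $3\cdot\#D_4=(\#\text{involutions})\cdot(m/2)$ yields $\#D_4=\tfrac16 pqr$, matching the fourth term. The key point, and the main obstacle to monitor, is that although $m$, the involution count, and $\mu_2(\Z/2,G)$ all depend on $p\bmod 4$, the product $\mu_2(\Z/2,G)\cdot\#\{\text{involutions}\}=(m/2)(pqr/m)=\tfrac12 pqr$ is independent of that residue, which is exactly what makes the final series uniform in $p$.

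Finally, $\mu_2(\{1\},G)$ is pinned down most cleanly by the normalization $P_2(G,0)=1$, that is $-\sum_{A\in\mathcal{N}_2(G)}\mu_2(A,G)=1$. Substituting the value $-1$ for the four maximal families weighted by $p+1,\ pr,\ pq,\ \tfrac16 pqr$, together with the involution contribution $\tfrac12 pqr$ just computed, forces $\mu_2(\{1\},G)=p+pr+pq-\tfrac13 pqr$. Feeding the six pairs (type, index, weighted $\mu_2$) into Proposition~\ref{P-as-Dirichlet} then assembles precisely the stated formula for $P_2(PSL(2,p),s)$.
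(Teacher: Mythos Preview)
The paper states this proposition without proof, so there is no argument to compare against; your proposal is the natural one given the paper's framework (Proposition~\ref{P-as-Dirichlet} together with Hall's vanishing criterion for $\mu_2$), and it is essentially correct.

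One small point deserves attention. Your telescoping step $\sum_{2\mid d\mid m}\mu_2(\Z/2,\Z/d)=\sum_{e\mid(m/2)}\mu_\Z(e)=0$ requires $m>2$, which you flag but do not resolve. The exceptional case $m=2$ occurs exactly when $p=5$ (since $m=q$ for $p\equiv 1\pmod 4$ and $m=r$ for $p\equiv 3\pmod 4$, and $q=2$ forces $p=5$ while $r=2$ would force $p=3$). For $p=5$ one has $q=2$, so $\Z/q=\Z/2$ is no longer maximal abelian and the unique maximal abelian containing an involution $t$ is $C_G(t)=D_4$ itself; hence $\mu_2(\langle t\rangle,G)=-(1+(-1))=0$. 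The formula nevertheless holds because the $\Z/q$ term $pr/(2pr)^s$ and the $\Z/2$ term $-\tfrac12 pqr/(pqr)^s$ then sit at the same index $2pr=pqr=30$ with equal and opposite coefficients $pr=\tfrac12 pqr=15$, so they cancel. A sentence to this effect would close the gap; alternatively, verifying the $A_5$ series directly (as in Example~\ref{example-series}) suffices.
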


\medskip
\begin{rem} Notice that the isomorphism $Hom(A,B\times C)\cong Hom(A,B)\times Hom(A,C)$ induces a homeomorphism 
$E(q,G\times H)\cong E(q,G)\times E(q,H)$, and produces the identity
\[ P_q(G\times H,s)=P_q(G,s)P_q(H,s) .\]
This tells us that 
\[ \chi(E(q,G\times H))=\chi(E(q,G))\chi(E(q,H)), \] which is
is simpler than the formula obtained in~\cite{Brown} for the poset of proper cosets. What makes the ordinary coset poset
more complex with respect to products is that we do not have, in general, an isomorphism between $BN_n(G\times H)$ and
$BN_n(G) \times BN_n(H)$.
\end{rem}

This latter formula allows to compute the Euler characteristic of $\mathcal{C}_q(G)$ for any nilpotent group $G$ in terms of that of
its Sylow supgroups.


\begin{exm} Let $G$ be the central product of $Q_8$ with itself. $G$ is an extraspecial group of order 32. The maximal abelian subgroups of $G$
have order 8  and the center of $G$ is cyclic of order 2. It follows that $E(2,G)$ has the homotopy type of a 2-dimensional complex.
We can see that
\[ \mu_2(H,G)=\left\{ \begin{array}{rcl}
-1 & if & |H|=8 \\
2 & if & |H|=4 \\ 
-16 & if & |H|=2 \\ \end{array}\right. \]
and find that $\chi(E(2,G))= 76$.
We also have 
\[ P_2(G,s)= \frac{15}{4^s}-\frac{30}{8^s} + \frac{16}{16^s} \]

We can also compute the homology of $E(2,G)$ by using MAGMA:

\[ H_*(E(2,G);\Z) = \left\{ \begin{array}{ccc}
\Z & if & *=0 \\
\Z/2 & if & *=1 \\
\Z^{75} & if & *=2 \\
0 & if & *\geq 3 .
\end{array}\right.
 \] 
Note that this is the same as the homology of $\R P^2\vee \bigvee^{75} S^2$.
\end{exm}

\begin{rem} This latter example shows that $E(q,G)$ does not have to have the homotopy type of a bouquet of spheres when $G$ is solvable. 
In fact, if it was a bouquet of spheres of dimension $> 1$, then $\pi_1(B(2,G))=G$.
\end{rem}


\begin{rem} Note that $P_q(G,1)=1$ for all $G$. Define 
\[ R_q(G,S)=1-P_q(G,s)=\sum_{\mathcal{N}_q(G)\cup\{ G\} }\frac{\mu_q(H,G)}{|G:H|^s} \]
Thus $-R_q(G,-1)=\tilde\chi(\mathcal{C}_qG))$,  the reduced Euler characteristic of $\mathcal{C}_q(G)$. This function $R_q(G,S)$ has a probabilistic interpretation:
$R_q(G,s)$ is the probability that a randomly chosen ordered $s$-tuple does not generate a nilpotent subgroup of class $< q$. 
The inclusions $Hom(F_n/\Gamma^q_n,G)\subseteq Hom(F_n/\Gamma^{q+1}_n,G)$ show that for all integers $s\geq 1$ we have
\[ R_2(G,s)\geq \cdots\geq R_q(G,s)\geq R_{q+1}(G,s)\geq\cdots \]
\end{rem}

Likewise, the function $P_q(G,s)-P_{q-1}(G,s)$ is the probability that a randomly chosen $s$-tuple generates a nilpotent subgroup of
class $q-1$. 

\begin{rem} If $G$ is finite, then the series $P_q(G,s)$ stabilize, that is, there is some $q_G$ such that 
$P_{q_G}(G,s)=P_q(G,s)$ for all $q\geq q_G$. The integer $q_G$ is the smallest integer so that $\Gamma^{q_G}(H)=1$ for all
proper, nilpotent subgroups of $G$.
\end{rem}

\subsection{Factorizations}
The ring of finite Dirichlet series with integer coefficients
\[ \mathcal{R} = \left\{ \sum_{n\geq 0}  \frac{a_n}{n^s} \colon a_n\in\Z,\ a_n\neq 0 \mbox{ for finitely many $n$'s} \right\} \] 
is a unique factorization domain as it is a polynomial ring over $\Z$ in $\frac{1}{2^s}, \frac{1}{3^s}, \frac{1}{5^s},\ldots$. 
It is then natural to study the factorization of $P_q(G,s)$ and $R_q(G,s)$ in $\mathcal{R}$.

\begin{exm} 
\begin{enumerate}
\item Recall that $D_{2n}$ is nilpotent if and only if $n$ is a power of $2$. Suppose that $n=2^{r}$ and $r>1$, so $D_{2n}$ has class $r$. 
Then
\[ R_{r-k+1}(D_{2n},s)=1- \frac{1}{2^s}- \frac{2^k}{2^{ks}} + \frac{2^k}{(2^{k+1})^s} = -(1-\frac{1}{2^s}) \prod_{d|k} \Phi_d \left( \frac{2}{2^s} \right) ,\]
where $\Phi_d$ is the $d^{th}$ cyclotomic polynomial, and $1\leq r-k< r$.
Moreover, 
\[ E(r-k+1,D_{2n})\simeq \bigvee_{2^{2k}-1} S^1 .\] 
for $1\leq r-k< r$.

\item If $G$ is the central product of $Q_8$ with itself, then
\[ R_2(G,s)=(1-\frac{1}{2^s})(1-\frac{2}{2^s})(1+\frac{3}{2^s}-\frac{8}{4^s}) .\]

\item If $q=2^n$ and $n\geq 2$, then
\[ R_2(SL(2,q),s)= 1+\frac{q(q+1)}{[q(q-1)]^s}  -\frac{q+1}{[q^2-1]^s} -\frac{\frac{1}{2}q(q-1)}{[q(q-1)]^s}-\frac{\frac{1}{2}q(q+1)}{[q(q+1)]^s} \]
is irreducible.

\item If $M_{11}$ is the Mathieu group of degree 11, then
\[R_2(M_{11},s)=1-\frac{144 }{ 720^s}-\frac{ 55}{ 880^s}-\frac{ 495}{ 990^s}-\frac{ 660}{1320^s} - \frac{ 396}{1584^s}+\frac{330 }{1980^s}
+\frac{660 }{ 2640^s}+\frac{1980 }{ 3960^s}-\frac{561 }{ 7920^s} \]
is irreducible.
\end{enumerate}
\end{exm}

\begin{conjecture} If $G$ is a finite simple group, then $R_2(G,s)$ is irreducible in $\mathcal{R}$.
\end{conjecture}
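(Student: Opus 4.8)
The plan is to pass to the polynomial description of the target ring, $\mathcal{R}\cong\Z[x_2,x_3,x_5,\dots]$ with $x_p=p^{-s}$, under which
\[ R_2(G,s)=\sum_{H\in\mathcal{N}_2(G)\cup\{G\}}\mu_2(H,G)\prod_p x_p^{\,v_p(|G:H|)} \]
is a genuine integral polynomial ($v_p(n)$ the exponent of $p$ in $n$). The summand $H=G$ contributes the constant $1$, so $R_2(G,s)$ has constant term $1$; hence in any factorization $R_2=fg$ the two constant terms multiply to $1$, and after absorbing a sign we may assume $f$ and $g$ both have constant term $1$. The problem is then to exclude every nontrivial factorization of this normalized polynomial. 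As a first reduction I would record the behaviour near the constant term: for a maximal element $M$ of $\mathcal{N}_2(G)$ one has $\mu_2(M,G)=-1$, so the monomials of smallest total degree carry coefficient $-1$ (up to collisions among equal indices), which fixes the edges of the Newton polytope emanating from the origin.

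The main engine is Ostrowski's theorem: over an integral domain the Newton polytope of a product equals the Minkowski sum of the Newton polytopes of the factors. Since the origin lies in the support of $R_2$, it is a vertex of its Newton polytope $\mathcal{P}(G)$, and if $\mathcal{P}(G)$ is Minkowski-indecomposable then in $R_2=fg$ one factor has a single-point Newton polytope, i.e. is a monomial; because the origin must lie in its support, this monomial is the constant $\pm1$, a unit. Thus irreducibility of $R_2(G,s)$ follows as soon as $\mathcal{P}(G)$ is Minkowski-indecomposable, a purely combinatorial condition on the multiset of indices $|G:H|$ and their prime factorizations. First I would try to establish indecomposability directly, exhibiting the origin and the maximal-abelian-subgroup monomials as vertices whose edge vectors are primitive and span sufficiently many directions, and invoking the standard primitive-edge (Shephard) criteria for indecomposability of lattice polytopes.

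Carrying out this combinatorial verification is where the classification of finite simple groups enters. For groups of Lie type the abelian subgroups are governed by the maximal tori and Sylow subgroups, whose orders are products of cyclotomic values in $q$; the goal would be a generic-in-$q$ argument showing that the exponent vectors attached to the distinct torus indices are primitive and affinely independent for all but finitely many $q$, the residual small cases being settled by hand. The alternating groups would be handled through the index data of their cyclic and elementary abelian maximal abelian subgroups, and the sporadic groups by direct computation of $\mathcal{P}(G)$ in MAGMA, exactly as $R_2(M_{11},s)$ is computed in the paper. Where $\mathcal{P}(G)$ turns out to be Minkowski-decomposable — which can happen even though $R_2$ is still irreducible — the polytope argument must be supplemented by a coefficient-level obstruction: a Gauss-lemma/primitivity argument combined with reduction of coefficients modulo a well-chosen prime $\ell$ (Eisenstein-style), or a direct check that the integers $\mu_2(H,G)$ cannot arise as the convolution of two shorter Dirichlet series.

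The hard part will be exactly these decomposable cases, where irreducibility is an arithmetic fact about the M\"obius numbers $\mu_2(H,G)$ rather than a combinatorial fact about the indices. There is no uniform formula for $\mu_2(H,G)$ over all simple groups, so ruling out the ``accidental'' convolution factorizations invisible to the Newton polytope would require simultaneous control of these M\"obius values across every family — and it is this lack of uniform control, rather than any single difficult group, that keeps the statement at the level of a conjecture.
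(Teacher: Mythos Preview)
There is nothing in the paper to compare against: the statement is recorded as a \emph{conjecture}, with no proof or proof sketch given. The paper only offers supporting evidence (the computations for $SL(2,q)$ with $q=2^n$ and for $M_{11}$) and leaves the general statement open.

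Your proposal is therefore not a proof but a research outline, and you yourself say as much in the final paragraph. The Newton-polytope/Ostrowski reduction is a sensible first move, and the observation that the constant term is $1$ so that any putative factors can be normalized is correct. But the outline has real gaps even before the ``hard part'' you flag. First, Minkowski indecomposability of $\mathcal{P}(G)$ is a strong hypothesis that you have not verified for a single infinite family; the edge-vector heuristics (``primitive and affinely independent for all but finitely many $q$'') are plausible but unproved, and the torus/Sylow index data for groups of Lie type do not obviously yield primitive edges once one accounts for collisions among indices of distinct maximal abelian subgroups. Second, as you note, indecomposability of the polytope is only sufficient, not necessary, so in the decomposable cases you fall back on ad hoc coefficient arguments (Eisenstein-type, reduction mod $\ell$) with no indication of which prime $\ell$ or which structural feature of $\mu_2(H,G)$ would make them work. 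In short, the proposal identifies a reasonable line of attack and honestly names its own obstruction, but it does not advance beyond the status the paper already assigns: conjecture.
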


Note that if $G$ is nilpotent then $P_q(G,s)$ is reducible for all finite $q\geq 2$. More precisely, if the Sylow subgroups of $G$ are 
$P_1,\ldots,P_n$, then $P_q(G,s)=P_q(P_1,s)\cdots P_q(P_n,s)$.

\subsection{Lifting nilpotent tuples} 

Let $H\lhd G$. Recall that the ordinary probabilistic zeta function satisfies $P(G,s)=P(G/N,s)P(G,N,s)$, 
where $P(G,N,s)$ is the probability that a random lift of a generating $s$-tuple is $G/N$ to an $s$-tuple in $G$ generates $G$ (see~\cite{Brown}).
Note that this is a conditonal probability formula. We will say that a $s$-tuple is $q$-nilpotent if the subgroup that it generates is nilpotent of class $<q$.
We will compute the probability that a random lift of a $q$-nilpotent $s$-tuple in $G/N$ is a $q$-nilpotent $s$-tuple in $G$. This latter will be denoted
by $P_q(G,N,s)$.


Let $\phi \in Hom(F_n/\Gamma^q_n, G/N)$ and $K\subseteq G$. Define $\alpha_q(K, \phi)$ as the number of lifts of $\phi$ to $q$-tuples in $K$, and 
$\beta_q(K,\phi)$ as the number of such lifts that generate $K$. Thus
\[ \alpha_q(K,\phi) = \sum_{H\in\mathcal{L}_q(K)} \beta_q(H,\phi). \]
and so
\[ \beta_q(K,\phi) = \sum_{H\in\mathcal{L}_q(K)} \mu_q(H,K)\alpha_q(H,\phi) .\]
Note that $\beta_q(G)=0$ (if $G$ is not nilpotent of class less that $q$). Moreover,   $\alpha_q(H,\phi)$ is equal to $|H\cap N|^s$ if $\phi$ is in $HN/N$, or
zero otherwise. So
\[ \alpha_q(G,\phi) = -   \sum_{\substack{{H\in\mathcal{N}_q(K)} \\  \phi \in HN/N }} \mu_q(H,G) |H\cap N|^s . \]
Note that $\frac{ \alpha_q(G,\phi) }{|N|^s}$ is the probability that a randomly chosen lift of $\phi$ is a $q$-tuple in $G$. This latter depends on $\phi$, in general,
unless $q=\infty$.

Therefore, the total number of lifts of $q$-tuples in $G/N$ to $q$-tuples in $G$ is given by
\begin{align*}
 \Phi_q(G,N,s) &=  \sum_{\phi\in Hom(F_n/\Gamma^q_n,G/N)} \alpha_q(G,\phi) \\
  &= - \sum_{\phi\in Hom(F_n/\Gamma^q_n,G/N)} \sum_{H\in\mathcal{N}_q(K)} \mu_q(H,G) |H\cap N |^s \\
  &= - \sum_{H\in\mathcal{N}_q(K)}  \sum_{\phi\in Hom(F_n/\Gamma^q_n,HN/N)} \mu_q(H,G) |H\cap N |^s \\
  &=  - \sum_{H\in\mathcal{N}_q(K)} \mu_q(H,G) |H|^s .
 \end{align*}  

Thus
\[ P_q(G,N,s) = \frac{ \Phi_q(G,N,s)}{ |N|^s |Hom(F_n/\Gamma^q_n,G/N)|} = \frac{\Phi_q(G,N,s)/|G|^s}{ P_q(G/N,s)} = \frac{P_q(G,s)}{ P_q(G/N,s)} .\]




Unlike the case of the ordinary probabilistic function of $G$, it is not necessarily
true that $P(G,N,s)$ will be a Dirichlet series in $\mathcal{R}$. This latter can be seen by looking at the values of $P_q(G,s)$ and $P_q(G/N,s)$ at $s=-1$ 
(see, for instance, Example~\ref{example-series}). Thus $P_q(G,N,s)$ is rational.

\subsection{Localizations}
We can also consider localizations of the function $P_q(G,s)$ at different primes. There are at least three ``local'' versions of the probabilistic
functions.

\begin{itemize} 

\item Let $p$ be a prime number and let
$\mathcal{N}_q(G)_p=\{ H\in \mathcal{N}_q(G)| |G|_p=|H|_p  \}$. Define
\[P_q(G,s)_p = -\sum_{H\in \mathcal{N}_q(G)_p}\frac{\mu_q(H,G)}{|G:H|^s} .\]
This latter series is the image of $P_q(G,s)$ in the localization of $\mathcal{R}$ at $\frac{1}{p^s}$.
Thus $R_q(G,s)_p=1-P_q(G,s)_p$. 

\item We can twist again Hall's question and this time ask the following: given a fixed prime $p$, what is the probability that a randomly chosen $s$-tuple of
$G$ generates an abelian $p$-subgroup? This is
\[ \frac{Hom(\Z_p,G)}{|G|^s} \]
where $\Z_p$ stands for the $p$-adic integers. This probability can be described by using the poset of abelian $p$-subgroups of $G$.
Note that if the maximal abelian $p$-subgroups of $G$ are maximal abelian then this probability function coincides with $P_2(G,s)_p$ (for example,
take $G=SL(2,\F_{2^n})$). 

\item Let $\mathcal{N}_q(G)_{\{ p\} }= \{ H\in \mathcal{N}_q(G) \colon |G:H|=p^k, \mbox{for some } k\geq 1\}$. We define
\[P_q(G,s)_{\{ p\} } = -\sum_{H\in \mathcal{N}_q(G)_{\{ p\} }}\frac{\mu_q(H,G)}{|G:H|^s} .\]
Thus 
\[P_q(G,s)_{\{ p\} } = -\sum_{k\geq 1} \frac{a_k}{p^{ks}} ,\]
where \[ a_k=\sum_{ \substack{ {H\in \mathcal{N}_q(G)} \\ |G:H|=p^k   } }\mu_q(H,G). \]

\end{itemize}



\section{Commuting tuples in the symmetric group}

Note that $|Hom(\Z^2,\Sigma_n)| = n!p(n)$, where $p(n)$ is the number of partitions of $n$. Thus
\[ P_2(\Sigma_n,2)=\frac{p(n)}{n!} .\]
Computing the number of commuting $s$-tuples in $\Sigma_n$ is more elaborate than the previous observation as we will see.
Let $\mathcal{P}_n$ be the set of all partitions of $n$. A partition of $n$ will be denoted by $1^{a_1}2^{a_2}\cdots n^{a_n}$, where 
$a_k$ is the number of $k$-cycles in the partition. To compute $|Hom(\Z^s,\Sigma_n)|$ one has to note that an element in 
$Hom(\Z^s,\Sigma_n)$ can be identified with a $\Z^s$-set of degree $n$. A $\Z^s$-set of degree $n$ decomposes into transitive $\Z^s$-sets of
degree $a_i$ so that $1^{a_1}2^{a_2}\cdots n^{a_n}$ is a partition of $n$. A transitive $\Z^s$-set of degree $r$ can be 
identified with a subgroup of $\Z^s$ of index $r$.  Let $j_r(\Z^s)$  be the number of subgroups of $\Z^s$ of index $r$. The number
$j_r(\Z^s)$ turns out to be finite for all $s$ and $r$, and can be computed by using the following two formulae (see~\cite{Solomon}, 
and~\cite{LS}, Section 15.~2):
\[ j_r(\Z^s)= \sum_{r_1\cdots r_s=r} r_2r_3^2\cdots r_s^{s-1}, \]
and
\[ j_r(\Z^s)= \sum_{d|r} j_d(\Z^{s-1}) .\]
Thus we have the following result.
\begin{prop} The number of commuting $s$-tuples in $\Sigma_n$ is given by
\[ |Hom(\Z^s,\Sigma_n)| =\sum_{ 1^{a_1}2^{a_2}\cdots n^{a_n} \in \mathcal{P}_n} \frac{n!}{\prod_{i=1}^{n} i^{a_i} a_i! }
\prod_{i=1}^{n} j_i(\Z^s)^{a_i} .\]
\end{prop}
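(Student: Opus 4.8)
The plan is to interpret a point of $\Hom(\Z^s,\Sigma_n)$ as a $\Z^s$-action on the \emph{labeled} set $\{1,\dots,n\}$ and to enumerate such actions by their orbit decomposition. First I would fix an orbit type, i.e.\ a partition $1^{a_1}2^{a_2}\cdots n^{a_n}\in\mathcal{P}_n$ recording that the action has exactly $a_i$ orbits of size $i$. Specifying an action of this type amounts to two independent choices: (i) a set partition of $\{1,\dots,n\}$ into $a_1$ blocks of size $1$, $a_2$ blocks of size $2$, etc.\ (the orbits), and (ii) a transitive $\Z^s$-action on each block. Summing, over all $\lambda\in\mathcal{P}_n$, the product of the counts of (i) and (ii) will give $|\Hom(\Z^s,\Sigma_n)|$. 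The number of set partitions in (i) is the standard count $\dfrac{n!}{\prod_{i=1}^n (i!)^{a_i}\,a_i!}$, where the $a_i!$ corrects for the fact that blocks of equal size are unordered.

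The crux is step (ii): counting transitive $\Z^s$-actions on a labeled set of size $i$. As recalled in the excerpt, a transitive $\Z^s$-set of degree $i$ is isomorphic to $\Z^s/H$ for a (unique, since $\Z^s$ is abelian) subgroup $H$ of index $i$, and there are $j_i(\Z^s)$ such subgroups. To pass from isomorphism classes to labeled structures, I would fix a transitive set $T\cong\Z^s/H$ and count bijections $T\to\{1,\dots,i\}$ modulo the $\Z^s$-automorphisms of $T$: two bijections transport the action to the same labeled action precisely when they differ by an element of $\operatorname{Aut}_{\Z^s}(T)$, yielding $i!/|\operatorname{Aut}_{\Z^s}(T)|$ labeled actions. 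Since $\Z^s$ is abelian, $\operatorname{Aut}_{\Z^s}(\Z^s/H)\cong \Z^s/H$ has order $i$, so each isomorphism class contributes $i!/i=(i-1)!$ labeled transitive actions, and summing over the $j_i(\Z^s)$ subgroups of index $i$ gives $(i-1)!\,j_i(\Z^s)$ transitive actions on a labeled $i$-set.

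Finally I would assemble the two counts. For a fixed partition the contribution is
\[
\frac{n!}{\prod_{i=1}^n (i!)^{a_i}\,a_i!}\ \prod_{i=1}^n\bigl[(i-1)!\,j_i(\Z^s)\bigr]^{a_i},
\]
and the identity $(i-1)!^{\,a_i}/(i!)^{a_i}=1/i^{a_i}$ collapses this to $\dfrac{n!}{\prod_i i^{a_i}a_i!}\prod_i j_i(\Z^s)^{a_i}$, exactly the claimed summand; summing over $\mathcal{P}_n$ finishes the proof. I expect the main obstacle to be step (ii): correctly passing from isomorphism classes of transitive $\Z^s$-sets to actions on a \emph{labeled} set, which hinges on the computation $|\operatorname{Aut}_{\Z^s}(\Z^s/H)|=i$. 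The factor $(i-1)!$ produced this way never appears in the target formula precisely because it cancels the $i!$ in the set-partition count down to $i^{a_i}$; keeping track of this cancellation, and ensuring the unordered-block factor $a_i!$ is accounted for exactly once, are the delicate points. (As a sanity check, $s=1$ forces $j_i(\Z)=1$ and recovers $\sum_{\lambda}n!/z_\lambda=n!$, while $s=2$ gives $j_i(\Z^2)=\sigma(i)$ and, using $z_\lambda=\prod_i i^{a_i}a_i!$, reproduces $|\Hom(\Z^2,\Sigma_n)|=n!\,p(n)$.)
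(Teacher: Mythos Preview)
Your argument is correct and follows essentially the same approach as the paper: the paragraph preceding the proposition is the paper's entire proof, and it outlines exactly the orbit-decomposition idea you carry out (identify $\Hom(\Z^s,\Sigma_n)$ with $\Z^s$-sets of degree $n$, decompose into transitive pieces, and count transitive $\Z^s$-sets via index-$i$ subgroups). You have simply filled in the details the paper leaves implicit---the labeled-versus-isomorphism-class passage via $|\operatorname{Aut}_{\Z^s}(\Z^s/H)|=i$, the resulting $(i-1)!$ factor, and its cancellation with the set-partition count---so there is nothing to correct.
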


\section{Conjugacy classes, lower bounds, and upper bounds.}

Another important invariant of a group is the number of conjugacy classes of ordered commuting $n$-tuples.
If $n\geq 1$, then we denote  by $k_n(G)$ the number of conjugacy classes of commuting $n$-tuples in $G$. 
In this section we will use the letter $n$ instead of $s$ to stress the fact that all the results concerning $P_2$ are
for non-negative integers. We begin with the following useful lemma.

\begin{lem}
Let $K$ be any discrete group. Then 
\[|Hom(K\times\Z,G)|/|G| = |Hom(K,G)/G| .\]
\end{lem}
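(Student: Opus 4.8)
The plan is to parametrize homomorphisms out of the direct product. Since the $\Z$-factor of $K\times\Z$ commutes with all of $K$, a homomorphism $\phi\colon K\times\Z\to G$ is the same data as a homomorphism $\psi=\phi|_K\colon K\to G$ together with the image $g=\phi(e,1)$ of a generator of the $\Z$-factor (here $e$ is the identity of $K$), subject to the single constraint that $g$ commute with $\psi(k)$ for every $k\in K$; that is, $g\in C_G(\psi(K))$, the centralizer in $G$ of the image subgroup. This gives the exact counting identity
\[
|\Hom(K\times\Z,G)|=\sum_{\psi\in\Hom(K,G)}|C_G(\psi(K))|.
\]

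Next I would recognize each summand as a point-stabilizer for a group action. Let $G$ act on $\Hom(K,G)$ by conjugation, $(g\cdot\psi)(k)=g\psi(k)g^{-1}$. The stabilizer of $\psi$ is exactly $\{g\in G : g\psi(k)g^{-1}=\psi(k)\text{ for all }k\in K\}=C_G(\psi(K))$, so the summand $|C_G(\psi(K))|$ equals $|\operatorname{Stab}_G(\psi)|$.

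The heart of the argument is then the elementary orbit-counting identity $\sum_{\psi}|\operatorname{Stab}_G(\psi)|=|G|\cdot|\Hom(K,G)/G|$. Grouping the sum by $G$-orbits and applying the orbit–stabilizer theorem, each orbit $\mathcal{O}$ contributes $|\mathcal{O}|\cdot(|G|/|\mathcal{O}|)=|G|$, so the total is $|G|$ times the number of orbits, namely $|G|\cdot|\Hom(K,G)/G|$. Combining this with the first display and dividing by $|G|$ yields the claim.

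The only point requiring care is finiteness: for the cardinalities to make literal sense I expect $K$ to be finitely generated, say on $m$ generators, in which case $|\Hom(K,G)|\leq |G|^{m}$ and every set in sight is finite; otherwise both sides are simultaneously infinite and the equality is read in that sense. Beyond this bookkeeping there is no genuine obstacle — the entire content lies in the identification of $C_G(\psi(K))$ with the stabilizer $\operatorname{Stab}_G(\psi)$ together with the orbit-counting lemma, and I would expect the interchange of the defining constraint on $g$ with the centralizer condition to be the one step worth stating explicitly.
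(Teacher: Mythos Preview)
Your proof is correct and follows essentially the same route as the paper: both decompose $\Hom(K\times\Z,G)$ as $\bigsqcup_{\psi\in\Hom(K,G)}C_G(\psi)$ and then apply orbit--stabilizer for the conjugation action of $G$ on $\Hom(K,G)$ to convert $\sum_\psi |C_G(\psi)|$ into $|G|\cdot|\Hom(K,G)/G|$. The paper phrases this last step as $\sum_\psi 1/|G:C_G(\psi)|=\sum_{[\psi]}1$, but it is the same orbit-counting you carry out.
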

\begin{proof}
Note that 
\[ Hom(K\times\Z,G) = \bigsqcup_{\phi\in Hom(K,G)} C_G(\phi). \]
So
\begin{align*} \frac{1}{|G|}| Hom(K\times\Z,G)| &=\frac{1}{|G|} \sum_{\phi\in Hom(K,G)}| C_G(\phi)| 
 =  \sum_{\phi\in Hom(K,G)} \frac{1}{|G:C_G(\phi)|}  \\
 &=  \sum_{(\phi)\in Hom(K,G)/G} 1 =  | Hom(K,G)/G| . \end{align*}
\end{proof}

The previous result shows that 
 \[  k_n(G)= |Hom(\Z^n,G)/G|=|Hom(\Z^{n+1},G)|/|G| =P_2(G,n+1)|G|^n .\]
 This latter in turn shows that
 \[ k_n(G)=-\frac{1}{|G|}\sum_{H\in \mathcal{N}_2(G)}\mu_2(H,G)|H|^{n+1}. \]


\begin{prop} Let $H$ be a subgroup of $G$. Then 
\[ |G:H|^{-2n} P_2(H,n)\leq  P_2(G,n)\leq P_2(H,n),\] 
and therefore 
\[|G:H|^{-1}k_n(H)\leq  k_n(G)\leq |G:H|^n  k_n(H) .\]
\end{prop}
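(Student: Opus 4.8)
The plan is to prove the two displayed chains of inequalities, noting that the second follows from the first by the established identity $k_n(G)=P_2(G,n+1)|G|^n$. So I concentrate on the double inequality for $P_2$.

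The starting point is the formula $P_2(G,n)=|Hom(\Z^n,G)|/|G|^n$, which lets me work entirely with counts of commuting tuples. For the upper bound $P_2(G,n)\le P_2(H,n)$, the idea is to compare $Hom(\Z^n,G)$ with $Hom(\Z^n,H)$ by intersecting a commuting tuple in $G$ with the cosets of $H$. More precisely, I would stratify $Hom(\Z^n,G)$ according to how a commuting tuple meets the subgroup structure, or instead use the elementary observation that for each commuting $n$-tuple in $G$ one can count its translates landing in $H$. A clean route: write $|G|=|G:H|\,|H|$ so the claim $|Hom(\Z^n,G)|/|G|^n\le |Hom(\Z^n,H)|/|H|^n$ is equivalent to $|Hom(\Z^n,G)|\le |G:H|^n\,|Hom(\Z^n,H)|$. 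The plan here is to build an injection (or a many-to-few counting map) from $Hom(\Z^n,G)$ into the set of pairs consisting of a commuting tuple in $H$ together with a choice of $n$ coset representatives, exploiting that each coordinate of a $G$-tuple lies in one of $|G:H|$ cosets of $H$.

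For the lower bound $|G:H|^{-2n}P_2(H,n)\le P_2(G,n)$, I would run the comparison in the other direction. Rewriting, the claim is $|Hom(\Z^n,H)|\,|H|^n\le |G:H|^{2n}\,|Hom(\Z^n,G)|\,|H|^{2n}/|G|^n$, which after substituting $|G|=|G:H|\,|H|$ reduces to a clean inequality between $|Hom(\Z^n,H)|$ and $|Hom(\Z^n,G)|$ with an explicit power-of-index factor. The natural mechanism is that every commuting tuple in $H$ is in particular a commuting tuple in $G$, giving the inclusion $Hom(\Z^n,H)\subseteq Hom(\Z^n,G)$ and hence $|Hom(\Z^n,H)|\le |Hom(\Z^n,G)|$; the index factors then come from converting between the two normalizations $|H|^n$ and $|G|^n$ and absorbing the mismatch into powers of $|G:H|$. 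I expect the bookkeeping of which power of $|G:H|$ is forced will be where care is needed, since the exponents $2n$ and $-2n$ in the statement are not symmetric with the naive $n$.

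The main obstacle will be pinning down the exact combinatorial map that delivers the factor $|G:H|^n$ in the upper bound: the inclusion $Hom(\Z^n,H)\subseteq Hom(\Z^n,G)$ immediately gives the lower direction, but the upper direction requires controlling how a commuting tuple in $G$ fails to lie in $H$, and it is not a priori obvious that translating coordinates back into $H$ preserves commutativity. I would handle this by not insisting on landing in $Hom(\Z^n,H)$ after a naive coordinatewise translation, but rather by the coset-counting bound $|Hom(\Z^n,G)|\le |G:H|^n|Hom(\Z^n,H)|$ proved via a direct partition of $G^n$ into $|G:H|^n$ translates of $H^n$ and the observation that in each translate the commuting tuples are no more numerous than the commuting tuples of $H$ after a fixed shift. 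Once both normalized inequalities are in hand, I substitute into $P_2(G,n)=|Hom(\Z^n,G)|/|G|^n$ to recover the stated bounds, and finally multiply through by $|G|^n$ using $k_n(G)=P_2(G,n+1)|G|^n$ to obtain the corresponding bounds for $k_n$.
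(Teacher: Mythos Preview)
Your lower bound is fine: the inclusion $Hom(\Z^n,H)\subseteq Hom(\Z^n,G)$ already gives $|G:H|^{-n}P_2(H,n)\le P_2(G,n)$, which is even stronger than the stated $|G:H|^{-2n}$ bound (and it is this sharper exponent that is needed to deduce $|G:H|^{-1}k_n(H)\le k_n(G)$ after shifting $n\mapsto n+1$; the $-2n$ version does not literally imply it).

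The genuine gap is in the upper bound. You reduce $P_2(G,n)\le P_2(H,n)$ to the claim that in every translate $t_1H\times\cdots\times t_nH$ the number of commuting tuples is at most $|Hom(\Z^n,H)|$, and then call this an ``observation''. But this is the entire content of the inequality, not a triviality: coordinatewise translation $t_ih_i\mapsto h_i$ destroys commutativity, so there is no evident injection into $Hom(\Z^n,H)$. The claim is true, but proving it requires real work. For $n=2$ one can argue by two Fubini swaps: for fixed $g_1\in t_1H$ the set $C_G(g_1)\cap t_2H$ is empty or a coset of $C_H(g_1)$, giving $\sum_{g_1\in t_1H}|C_H(g_1)|$ as an upper bound, and then swapping again bounds this by $\sum_{h\in H}|C_H(h)|=|Hom(\Z^2,H)|$. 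For general $n$ one must iterate this, which amounts to an induction on $n$ passing from $(G,H)$ to the pair $(C_G(g),C_H(g))$.

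That inductive mechanism is exactly what the paper does directly: it writes $|Hom(\Z^n,G)|=\sum_{g\in G}|Hom(\Z^{n-1},C_G(g))|$, applies the inductive hypothesis to the pair $C_H(g)\le C_G(g)$ together with $|C_G(g):C_H(g)|\le |G:H|$, and then performs one more summation swap over $\phi\in Hom(\Z^{n-1},H)$. So your coset-partition strategy, once the ``observation'' is actually proved, unwinds to essentially the same centralizer induction as in the paper; as written, the proposal stops precisely at the step that carries all the difficulty.
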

\begin{proof}
We proceed by induction on $n$ to prove all the inequalities. The case $n=1$ is well-known (see~\cite{GR}).  
Suppose that $n>1$. For the  right-hand side inequalities we have

\[ |Hom(\Z^n,G)| = \sum_{g\in G} |Hom(\Z^{n-1},C_G(g))|   \leq    \sum_{g\in G} |C_G(g):C_H(g)|^{n-1} |Hom(\Z^{n-1},C_H(g))| \]

Since $|C_G(g):C_H(g)|\leq |G:H|$ it follows that 

\[ |Hom(\Z^n,G)| \leq \sum_{g\in G} |G:H|^{n-1} |Hom(\Z^{n-1},C_H(g))| \]

On the other hand,
 \begin{align*}
\sum_{g\in G}  |Hom(\Z^{n-1},C_H(g))| & = \sum_{ \phi\in Hom(\Z^{n-1},H)}  |C_G( \phi)| \\
 & = \sum_{ [\phi ]\in Hom(\Z^{n-1},H)/H}  |C_G(\phi)| |H: C_H(\phi)| \\
 & = \sum_{[\phi]\in Hom(\Z^{n-1},H)/H}  |H| |C_G(\phi) :C_H(\phi)| \\
 & \leq \sum_{[\phi]\in Hom(\Z^{n-1},H)/H}  |H| |G:H| \\ 
 & = |G| k_{n-1}(H) \\
 &= |G:H| |Hom(\Z^n,H)|. 
\end{align*}
The right-hand side inequalities follow.

For the left-hand side inequalities we have
\begin{align*} 
|H|^n |Hom(\Z^n,H) &= \sum_{h\in H} |Hom(\Z^{n-1},H)| \leq |H|^n \sum_{h\in H} |C_G(h):C_H(h)|^{n-1}|Hom(\Z^{n-1},C_G(h)| \\
 &\leq  |H|^n |G:H|^{n-1} \sum_{h\in H} |Hom(\Z^{n-1},C_G(h)| \\
 &\leq |G|^{n-1} |H| \sum_{g\in G} |Hom(\Z^{n-1},C_G(g)| \leq |G|^n |Hom(\Z^n,G)|
\end{align*}
The left-hand side inequalities follow.
\end{proof}

\begin{rem} Since $P_2(G\times H, n)=P_2(G,n)P_2(H,n)$, it follows that $k_n(G\times H)= k_n(G)k_n(H)$.

\end{rem}

\begin{prop} Let $N\lhd G$. Then $P_2(G,n)\leq P_2(G/N,n)P_2(N,n)$, and therefore
\[ k_n(G)\leq k_n(G/N)k_n(N). \]
\end{prop}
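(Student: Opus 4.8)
The plan is to reduce the statement to the purely combinatorial inequality
\[ |Hom(\Z^n,G)|\le |Hom(\Z^n,G/N)|\,|Hom(\Z^n,N)|, \]
which, after dividing by $|G|^n=|G/N|^n|N|^n$, is exactly $P_2(G,n)\le P_2(G/N,n)P_2(N,n)$. I would prove this inequality by induction on $n$, the base case $n=0$ (and $n=1$) being an equality. The engine of the induction is the centralizer decomposition already used in the previous proofs, namely $|Hom(\Z^n,G)|=\sum_{g\in G}|Hom(\Z^{n-1},C_G(g))|$, obtained by recording the first coordinate $g$ and noting that the remaining $n-1$ coordinates form a commuting tuple centralizing $g$.

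For the inductive step I would write $Q=G/N$ and $\pi\colon G\to Q$ for the quotient map, and apply the inductive hypothesis to the subgroup $C_G(g)$ together with its normal subgroup $C_N(g)=C_G(g)\cap N$ (normal since $N\lhd G$), getting $|Hom(\Z^{n-1},C_G(g))|\le |Hom(\Z^{n-1},C_G(g)/C_N(g))|\,|Hom(\Z^{n-1},C_N(g))|$. Since $C_G(g)/C_N(g)\cong C_G(g)N/N$ embeds in $C_Q(\bar g)$ (commuting with $g$ forces the images to commute with $\bar g$), and since $|Hom(\Z^{n-1},-)|$ is monotone under inclusion of groups, I can replace the first factor by $|Hom(\Z^{n-1},C_Q(\bar g))|$. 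Summing over $g\in G$ and grouping the terms according to the value $\bar g=\pi(g)$ (so that $g$ ranges over the coset $\pi^{-1}(\bar g)$) yields
\[ |Hom(\Z^n,G)|\le \sum_{\bar g\in Q}|Hom(\Z^{n-1},C_Q(\bar g))|\Big(\sum_{g\in \pi^{-1}(\bar g)}|Hom(\Z^{n-1},C_N(g))|\Big). \]

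The crux, and the step I expect to be the main obstacle, is the inner sum: I need, for every coset $g_0N$, the bound $\sum_{g\in g_0N}|Hom(\Z^{n-1},C_N(g))|\le |Hom(\Z^n,N)|$. I plan to prove this by interpreting the left side as the number of pairs $(g,\psi)$ with $g\in g_0N$ and $\psi\in Hom(\Z^{n-1},N)$ whose image commutes with $g$; swapping the order of counting rewrites it as $\sum_{\psi\in Hom(\Z^{n-1},N)}|g_0N\cap C_G(\psi)|$, where $C_G(\psi)$ is the centralizer in $G$ of the image of $\psi$. Now $g_0N\cap C_G(\psi)$ is the preimage of $\bar g$ under $\pi$ restricted to $C_G(\psi)$, hence is either empty or a coset of $C_N(\psi)=C_G(\psi)\cap N$; in either case its size is at most $|C_N(\psi)|$. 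Summing gives $\sum_\psi|C_N(\psi)|=|Hom(\Z^n,N)|$, the last equality being the centralizer count for $N$ (record the $(n-1)$-tuple $\psi$ first and then an $n$-th coordinate in $C_N(\psi)$). Feeding this bound back in collapses the outer sum to $|Hom(\Z^n,N)|\sum_{\bar g}|Hom(\Z^{n-1},C_Q(\bar g))|=|Hom(\Z^n,N)|\,|Hom(\Z^n,Q)|$, which completes the induction.

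Finally, the consequence for conjugacy classes is immediate from the earlier identity $k_n(G)=P_2(G,n+1)|G|^n$: multiplying the inequality $P_2(G,n+1)\le P_2(G/N,n+1)P_2(N,n+1)$ by $|G|^n=|G/N|^n|N|^n$ gives $k_n(G)\le k_n(G/N)k_n(N)$.
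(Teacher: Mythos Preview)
Your proof is correct and follows essentially the same route as the paper: induction on $n$ via the centralizer decomposition $|Hom(\Z^n,G)|=\sum_{g\in G}|Hom(\Z^{n-1},C_G(g))|$, applying the inductive hypothesis to $C_G(g)\rhd C_N(g)$, passing from $C_G(g)/C_N(g)\cong C_G(g)N/N$ to $C_{G/N}(\bar g)$ by monotonicity, grouping by cosets, and bounding the inner sum by swapping the order of summation and observing that $g_0N\cap C_G(\psi)$ is either empty or a single coset of $C_N(\psi)$. The only cosmetic difference is that the paper cites the base case $n=1$ as ``well-known'' rather than noting it is an equality.
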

\begin{proof}
We proceed by induction on $n$. The result for $n=1$ is well-known. For $n>1$ we have

\begin{align*} |Hom(\Z^n,G)| &= \sum_{g\in G} |Hom(\Z^{n-1},C_G(g))| \\
 &\leq \sum_{g\in G} |Hom(\Z^{n-1},C_G(g)N/N)| |Hom(\Z^{n-1},C_N(g))| \\
 &\leq \sum_{g\in G} |Hom(\Z^{n-1},C_{G/N}(gN)| |Hom(\Z^{n-1},C_N(g))| \\
 &= \sum_{xN \in G/N} \sum_{g\in xN}  |Hom(\Z^{n-1},C_{G/N}(gN)| |Hom(\Z^{n-1},C_N(g))| \\
 &= \sum_{xN \in G/N} |Hom(\Z^{n-1},C_{G/N}(xN)| \sum_{g\in xN}   |Hom(\Z^{n-1},C_N(g))| . \\  
\end{align*} 
On the other hand 
\[ \sum_{g\in xN}   |Hom(\Z^{n-1},C_N(g))| = \sum_{\phi \in Hom(\Z^{n-1},N)}   |C_{xN}(\phi)| . \]
Note that if $y$ is in $C_{xN}(\phi)$, then $C_{xN}(\phi)=C_G(\phi)\cap xN= yC_G(\phi)\cap yN= y C_N(\phi)$. So $C_{xN}(\phi)$ is either
empty or a coset of $C_N(\phi)$. Thus
\begin{align*} \sum_{g\in xN}   |Hom(\Z^{n-1},C_N(g))| &\leq \sum_{\phi \in Hom(\Z^{n-1},N)}   |C_{N}(\phi)|  \\
 &= \sum_{[\phi] \in Hom(\Z^{n-1},N)/N}  |N| \\
 &= |N|k_{n-1}(N) = |Hom(\Z^n,N)|. 
\end{align*}
The result follows.
\end{proof}

The following result provides a slight improvement over that in Lemma 2 of~\cite{GR}.

\begin{prop} Let $x$ be an element of $G-Z(G)$ so that $|C_G(x)|$ is as large as possible, and let $p$ be the smallest prime divisor of the index of $Z(G)$ in 
$G$. Let $c$ denote $|C_G(x)|$. Then for $n\geq 1$,
\[ P_2(G,n+1) \leq \frac{ (p^n+\cdots+p+1)c^n }{p^n |G|^n } .\] 
\end{prop}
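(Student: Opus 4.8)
The plan is to translate the statement about $P_2$ into one about conjugacy classes of commuting tuples and then run an induction on $n$ driven by a single clean recursion. Recall from the discussion preceding the proposition that $k_n(G)=P_2(G,n+1)|G|^n$, so the asserted inequality is equivalent to
\[ k_n(G)\le c^n\sum_{j=0}^{n}p^{-j}=\frac{(p^n+\cdots+p+1)c^n}{p^n}. \]
I would prove this form by induction on $n$. The engine is the recursion
\[ k_n(G)=\sum_{[g]\in\mathrm{Cl}(G)}k_{n-1}(C_G(g)), \]
the sum running over conjugacy-class representatives of $G$. This follows by grouping the identity $|Hom(\Z^{n+1},G)|=\sum_{g\in G}|Hom(\Z^n,C_G(g))|$ according to conjugacy classes (the term $|Hom(\Z^n,C_G(g))|$ depends only on the class of $g$, since conjugate elements have conjugate centralizers), dividing by $|G|$, and applying the Lemma with $K=\Z^{n-1}$ to each centralizer, i.e. $|Hom(\Z^n,H)|/|H|=k_{n-1}(H)$.

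The second ingredient is a sharp bound on the centre. Write $z=|Z(G)|$. Since $Z(G)\le C_G(x)$ while $x\notin Z(G)$, the index $d=[C_G(x):Z(G)]$ satisfies $d>1$ and $d\mid [G:Z(G)]$; as every prime divisor of $[G:Z(G)]$ is at least $p$, we get $d\ge p$ and hence
\[ z=\frac{c}{d}\le\frac{c}{p}. \]
I also record the elementary class-count estimate: each non-central class has size $[G:C_G(g)]\ge |G|/c$, so the number $t$ of non-central classes obeys $t\le (|G|-z)c/|G|\le c$.

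For the base case $n=1$ we have $k_1(G)=k(G)=z+t\le z+(|G|-z)c/|G|$, and clearing denominators reduces the desired bound $k(G)\le c(p+1)/p$ to $z(|G|-c)\le c|G|/p$, which is immediate from $z\le c/p$. For the inductive step I split the recursion into the $z$ central classes, each with $C_G(g)=G$ and so contributing $k_{n-1}(G)$, and the $t$ non-central classes, each contributing $k_{n-1}(C_G(g))\le c^{n-1}$. The inductive hypothesis together with $z\le c/p$ bounds the central part by $z\,k_{n-1}(G)\le (c/p)\,c^{n-1}\sum_{j=0}^{n-1}p^{-j}=c^n\sum_{j=1}^{n}p^{-j}$, while $t\le c$ bounds the non-central part by $c\cdot c^{n-1}=c^n$; adding the two recovers $c^n\sum_{j=0}^{n}p^{-j}$, as required.

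The main obstacle — and the precise point where a naive argument fails — is arranging the recursion to sum over conjugacy \emph{classes} rather than over all group elements. If one instead expands $|Hom(\Z^{n+1},G)|=\sum_{g\in G}|Hom(\Z^n,C_G(g))|$ directly and bounds each non-central term by $c^n$, the non-central contribution is over-counted by a factor of order $|G|/c$, which destroys the geometric-series cancellation and leaves an inductive bound too weak by exactly that factor. Passing to the $t\le c$ non-central classes is what makes the telescoping close, and combined with the sharp estimate $z\le c/p$ it yields the stated inequality.
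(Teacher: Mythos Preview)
Your argument is correct but follows a genuinely different route from the paper's. Both reduce to showing $k_n(G)\le c^n\sum_{j=0}^{n}p^{-j}$ and both hinge on the estimate $|Z(G)|\le c/p$, but the inductive recursions are different. The paper partitions $\Hom(\Z^n,G)$ according to whether the \emph{entire} $n$-tuple lies in $Z(G)^n$: a non-central tuple $\phi$ has $|C_G(\phi)|\le c$, hence $G$-orbit of size at least $|G|/c$, which gives $k_n(G)\le |Z(G)|^n\bigl(1-\tfrac{c}{|G|}\bigr)+c\,k_{n-1}(G)$; in normalized form $a_n\le a_{n-1}+p^{-n}$ where $a_n=k_n(G)/c^n$. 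You instead start from the exact class-sum identity $k_n(G)=\sum_{[g]}k_{n-1}(C_G(g))$ and split on the conjugacy class of a single coordinate, arriving at $k_n(G)\le (c/p)\,k_{n-1}(G)+c^n$, i.e.\ $a_n\le p^{-1}a_{n-1}+1$. These are ``dual'' recursions that iterate to the same geometric sum. Your version is a bit more structural---it begins from an identity and, as you rightly stress, makes transparent why grouping by conjugacy classes rather than elements is exactly what lets the induction close---while the paper's orbit-size argument stays entirely inside $G$ and never needs the crude bound $k_{n-1}(H)\le |H|^{n-1}$ for proper centralizers.
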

\begin{proof}
If $\phi=(x_1,\ldots,x_n)$ is in $Hom(\Z^n,G)$, then $C_G(\phi)\subseteq C_G(x_i)$. If at least one of the components of $\phi$ is not in $Z(G)$, then
$|C_G(\phi)|\leq c$.  Then for $n\geq 1$ we have
\[ |Hom(\Z^n,G)| \geq |Z(G)|^n + (k_n(G) - |Z(G)|^n)\frac{|G|}{c} .\]  
Note that $c/|Z(G)| \geq p$. So
\begin{align*} 
P_2(G, n+1) & \leq \frac{c}{|G|}P_2(G,n) + \left(1-\frac{c}{|G|}\right) \frac{|Z(G)|^n}{|G|^n} \\
 &\leq \frac{c}{|G|}P_2(G,n) + \frac{|Z(G)|^n}{|G|^n} \\
 &\leq \frac{c}{|G|}P_2(G,n) + \frac{c^n }{p^n |G|^n} .
\end{align*} 
An inductive argument applied to the latter inequality completes the proof.
\end{proof}

\begin{prop} Let $p$ be the smallest prime divisor of $|G:Z(G)|$, and let $m$ be the index of $Z(G)$ in $G$. Then
\[ \frac{ mn+m -n}{m^{n+1} } \leq  P_2(G,n+1)\leq \frac{ p^{n+1} + p^n-1}{p^{2n+1}} .\]
\end{prop}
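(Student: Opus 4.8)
The plan is to establish the two bounds separately, since they come from quite different sources. The upper bound will follow almost immediately from the preceding proposition, while the lower bound requires a genuinely new argument.

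\textbf{Upper bound.} For the right-hand inequality I would simply invoke the previous proposition with the obvious observation that $c = |C_G(x)| \leq |G|/p$. Indeed, if $x \in G - Z(G)$ then $C_G(x)$ is a proper subgroup of $G$ containing $Z(G)$, so $|G : C_G(x)| \geq p$, giving $c \leq |G|/p$. Substituting $c \leq |G|/p$ into
\[ P_2(G,n+1) \leq \frac{(p^n + \cdots + p + 1)\,c^n}{p^n |G|^n} \]
yields $P_2(G,n+1) \leq (p^n + \cdots + 1)/p^{2n} = (p^{n+1}-1)/\big(p^n(p-1)\cdot p^n\big)$. I would then need to reconcile this with the stated form $(p^{n+1} + p^n - 1)/p^{2n+1}$; a short calculation comparing $(p^{n+1}-1)/(p-1)$ against $(p^{n+1}+p^n-1)/p$ should show the claimed bound is the (slightly weaker, cleaner) one that holds, so the stated inequality follows.

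\textbf{Lower bound.} The left-hand inequality is where the real work lies, and I would prove it by induction on $n$, running in parallel with a count of centralizers. The base case $n=0$ (or $n=1$) reduces to the elementary fact that $k_1(G) = $ the number of conjugacy classes is at least $|Z(G)| + (|G|-|Z(G)|)/c$-type bounds, or more directly that $P_2(G,1)=1$ and the formula holds. For the inductive step I would use the recursion $|Hom(\Z^{n+1},G)| = \sum_{g\in G} |Hom(\Z^n, C_G(g))|$ and split the sum according to whether $g \in Z(G)$. For $g \in Z(G)$ we have $C_G(g) = G$, contributing $|Z(G)|\cdot|Hom(\Z^n,G)|$; for $g \notin Z(G)$ we bound $|C_G(g)| \geq$ its smallest possible value, which is governed by $Z(G)$ and the prime $p$. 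Writing $m = |G:Z(G)|$ and dividing through by $|G|^{n+1}$, the recursion should become a linear inequality relating $P_2(G,n+1)$ to $P_2(G,n)$ of the form $P_2(G,n+1) \geq \tfrac{1}{m}P_2(G,n) + (\text{correction term})$, where the correction accounts for the non-central contributions being bounded below by their central-coset structure.

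\textbf{Main obstacle.} The delicate point is the lower bound on the non-central contribution: unlike the upper-bound proposition, where we could uniformly cap $|C_G(\phi)| \leq c$, here I need a uniform \emph{lower} bound on how many homomorphisms $\Z^n \to C_G(g)$ exist, and the centralizers $C_G(g)$ for $g\notin Z(G)$ can be as small as possible subject only to containing $Z(G)\langle g\rangle$. The cleanest route is to bound $|Hom(\Z^n, C_G(g))| \geq |Z(G)|^n$ (the central homomorphisms) together with at least one further contribution from $g$ itself, and then feed the resulting two-term recursion into an induction whose closed form is exactly $(mn + m - n)/m^{n+1}$. I would verify by a direct computation that this rational function satisfies the recursion $f(n+1) = \tfrac{1}{m}f(n) + \tfrac{(m-1)}{m^{n+1}}$ (or the analogous relation the recursion produces), so that the induction closes; confirming that the guessed closed form is the exact solution of the recurrence is the step most likely to require care.
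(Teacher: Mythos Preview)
Your plan for the lower bound is fine and matches the paper's hint: one derives the recursion
\[ P_2(G,n+1)\ \geq\ \frac{1}{m}\,P_2(G,n) + \frac{m-1}{m^{n+1}} \]
from $|G:C_G(\phi)|\leq m$ for every $\phi$, and then checks that $g(n)=(mn+m-n)/m^{n+1}$ satisfies exactly this recurrence with $g(0)=1$. That part closes without difficulty.

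The upper bound, however, has a genuine gap. Substituting $c\leq |G|/p$ into the preceding proposition gives
\[ P_2(G,n+1)\ \leq\ \frac{p^{n+1}-1}{(p-1)\,p^{2n}}, \]
and this is \emph{strictly larger} than the claimed bound $(p^{n+1}+p^n-1)/p^{2n+1}$ for every $n\geq 1$ (check $p=2,n=1$: you get $3/4$ versus $5/8$). So the ``short calculation'' you allude to goes the wrong way: the stated bound is sharper, not weaker, and cannot be read off from the previous proposition with $c\leq |G|/p$.

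What the paper actually does for the upper bound is parallel to the lower-bound argument, not a corollary of the previous result. From $|G:C_G(\phi)|\geq p$ for $\phi\notin Z(G)^n$ one gets
\[ P_2(G,n+1)\ \leq\ \frac{1}{p}\,P_2(G,n) + \frac{p-1}{p}\cdot\frac{1}{|G:Z(G)|^{n}}, \]
and then the key extra input is that $|G:Z(G)|\geq p^2$ (since $G/Z(G)$ is never cyclic of prime order). This yields the recursion $P_2(G,n+1)\leq \frac{1}{p}P_2(G,n)+\frac{p-1}{p^{2n+1}}$, whose exact solution with initial value $1$ is $(p^{n+1}+p^n-1)/p^{2n+1}$. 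The previous proposition only used $|C_G(x):Z(G)|\geq p$, not $|G:Z(G)|\geq p^2$, which is why it gives a weaker inequality.
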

\begin{proof}
If $p$ is the smallest prime divisor of $|G|$, then $|G:C_G(\phi)|\geq p$ for all $\phi \not\in Z(G)^n$. So
\[ |Hom(\Z^n,G)| \geq |Z(G)|^n + p(k_n(G) - |Z(G)|^n) ,\]
and thus
\[ k_n(G) \leq \frac{1}{p} (|Hom(\Z^n,G)| + (p-1)|Z(G)|^n ). \]
Dividing by $|G|^n$ we have
\[ P_2(G,n+1) \leq \frac{1}{p} \left( P_2(G,n) + (p-1)\frac{1}{|G:Z(G)|^n} \right) .\] 
Note that $|G:Z(G)|\geq p^2$, else $G/Z$ is a cyclic group of order $p$. So
\[ P_2(G,n+1) \leq \frac{1}{p} \left( P_2(G,n) + \frac{p-1}{p^{2n}} \right) .\] 
An inductive argument applied to the latter inequality completes the proof for the right hand side inequality.
The left hand side inequality is obtained in a similar way.
\end{proof}

In the last result $p$ is of course at least $2$. The same proof yields the following result proved in~\cite{Lescot}.

\begin{cor} If $G$ is a nonabelian group, then
\[ P_2(G,n+1)\leq \frac {3\cdot 2^n -1}{2^{2n+1} }. \] 
\end{cor}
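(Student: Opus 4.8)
The plan is to read off the Corollary from the preceding Proposition by specializing the prime $p$ to its worst case. First I would observe that since $G$ is nonabelian we have $Z(G)$ a proper subgroup of $G$, so $|G:Z(G)|>1$ and hence the smallest prime divisor $p$ of $|G:Z(G)|$ satisfies $p\geq 2$. Moreover $G/Z(G)$ is never nontrivial cyclic, so $|G:Z(G)|$ is composite and therefore $\geq p^2$; this is precisely the inequality $|G:Z(G)|\geq p^2$ that the previous proof invokes, so that argument applies verbatim and yields
\[ P_2(G,n+1)\leq \frac{p^{n+1}+p^n-1}{p^{2n+1}}. \]

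The remaining point is that this upper bound is largest when $p$ is smallest, i.e.\ when $p=2$. To see this I would set
\[ g(p)=\frac{p^{n+1}+p^n-1}{p^{2n+1}}=p^{-n}+p^{-n-1}-p^{-2n-1}, \]
and check that $g$ is strictly decreasing on $[2,\infty)$ for each fixed $n\geq 1$. Differentiating and multiplying through by the positive quantity $p^{n+2}$ gives
\[ p^{n+2}g'(p)=-np-(n+1)+(2n+1)p^{-n}; \]
for $p\geq 2$ the positive term is at most $(2n+1)/2=n+1/2$, while the negative part is at most $-3n-1$, so $p^{n+2}g'(p)\leq -2n-1/2<0$. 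Hence $g(p)\leq g(2)$ for all $p\geq 2$, and clearing denominators gives $g(2)=\frac{3\cdot 2^n-1}{2^{2n+1}}$, which is the asserted bound.

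The main (and essentially only) obstacle is the elementary monotonicity estimate in the second step; all the substantive content, namely the induction on $n$ producing the bound $\frac{p^{n+1}+p^n-1}{p^{2n+1}}$, has already been established in the previous Proposition. If one prefers to avoid the derivative computation, an equally short route is to compare $g(p)$ with $g(2)$ directly by clearing denominators and verifying the resulting polynomial inequality in $p$ termwise. Either way I expect no difficulty beyond this routine estimate.
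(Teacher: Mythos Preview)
Your argument is correct, but it takes a slightly longer detour than the paper intends. The paper does not use the Proposition as a black box and then compare the bounds for different primes; rather, it observes that the very same inductive proof goes through with the number $2$ in place of the smallest prime $p$. Indeed, the only places $p$ enters that proof are the inequalities $|G:C_G(\phi)|\geq p$ for $\phi\notin Z(G)^n$ and $|G:Z(G)|\geq p^2$, and both hold with $p=2$ for any nonabelian $G$ (the first because a proper subgroup has index at least $2$, the second because $G/Z(G)$ is never nontrivial cyclic). Running the recursion $P_2(G,n+1)\leq \tfrac{1}{2}\bigl(P_2(G,n)+2^{-2n}\bigr)$ then gives the stated bound directly, with no need for your monotonicity computation. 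Your route is perfectly valid and self-contained, but the paper's shortcut avoids the calculus entirely.
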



We close this section by providing a family of congruences for $k_n(G)$. The congruence for $k_1(G)$ was proved in~\cite{Mann}.

\begin{prop} Let $p_1,\ldots, p_l$ be the prime divisors of $|G|$, and let $D_n$ be the greatest common divisor of $p^n_1-1,\ldots,p_l^n-1$.
Then
\[ k_{n-1}(G) \equiv |G|^{n-1} \pmod{ D_n}. \]
\end{prop}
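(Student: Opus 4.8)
The plan is to reduce the statement to a single clean congruence about the number of commuting tuples and then establish that congruence by a Singer-cycle action. First I would record two elementary facts about $D_n$. Since any prime $\ell\mid D_n$ divides $p_i^n-1$ for every $i$, it cannot divide any $p_i$; hence $\gcd(D_n,|G|)=1$, so $|G|$ is a unit modulo $D_n$. Moreover $D_n\mid p_i^n-1$ gives $p_i^n\equiv 1\pmod{D_n}$, so writing $|G|=\prod_i p_i^{a_i}$ yields $|G|^n=\prod_i(p_i^{n})^{a_i}\equiv 1\pmod{D_n}$. Using the identity $k_{n-1}(G)=|\Hom(\Z^n,G)|/|G|$ established above and cancelling the unit $|G|$, the claim $k_{n-1}(G)\equiv|G|^{n-1}\pmod{D_n}$ becomes equivalent to
\[ |\Hom(\Z^n,G)|\equiv 1 \pmod{D_n}. \]

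To prove this, set $e=\exp(G)$. Every homomorphism $\Z^n\to G$ has image of exponent dividing $e$, so $\Hom(\Z^n,G)=\Hom((\Z/e)^n,G)$, and $\mathrm{GL}_n(\Z/e)$ acts on this set by precomposition on the source. For $\sigma\in\mathrm{GL}_n(\Z/e)$, a fixed point $\phi$ satisfies $\phi\circ\sigma=\phi$, i.e. $\phi$ kills the image of $\sigma-1$; thus the $\sigma$-fixed homomorphisms are exactly those factoring through $\mathrm{coker}(\sigma-1)$, and when $\sigma-1$ is invertible the only fixed point is the trivial homomorphism.

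By the Chinese Remainder Theorem it suffices to prove $|\Hom(\Z^n,G)|\equiv1\pmod{\ell^a}$ for each prime power $\ell^a\,\|\,D_n$. For such $\ell^a$ I would construct $\sigma$ of order $\ell^a$ acting freely off the trivial homomorphism. Since $\mathrm{GL}_n(\Z/e)=\prod_{p\mid e}\mathrm{GL}_n(\Z/p^{b_p})$, it is enough to build a local factor $\sigma_p$ for each prime $p\mid e$ (equivalently each $p_i$). Here $\ell^a\mid p^n-1$, so the Singer cycle $\F_{p^n}^\times\hookrightarrow\mathrm{GL}_n(\F_p)$, which acts freely on $\F_p^n\setminus\{0\}$, contains an element $\tau_p$ of order $\ell^a$; every nontrivial power $\tau_p^{\,j}$ fixes no nonzero vector, whence $\tau_p^{\,j}-1\in\mathrm{GL}_n(\F_p)$. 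Because $\ell\neq p$, I can lift $\tau_p$ through $\mathrm{GL}_n(\Z/p^{b_p})\to\mathrm{GL}_n(\F_p)$ to an element $\sigma_p$ of the same order $\ell^a$ (the kernel is a $p$-group, so an appropriate power of any lift has order exactly $\ell^a$ and reduces to $\tau_p$); and since $\sigma_p^{\,j}-1$ reduces to the invertible matrix $\tau_p^{\,j}-1$ mod $p$, its determinant is a unit, so $\sigma_p^{\,j}-1\in\mathrm{GL}_n(\Z/p^{b_p})$ for $1\le j\le \ell^a-1$. Assembling $\sigma=(\sigma_p)_p$ gives $\sigma\in\mathrm{GL}_n(\Z/e)$ of order $\ell^a$ with $\sigma^{\,j}-1$ invertible for all $1\le j\le\ell^a-1$. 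Consequently the cyclic group $\langle\sigma\rangle\cong\Z/\ell^a$ fixes only the trivial homomorphism and acts freely on the rest, so every nontrivial orbit has size $\ell^a$ and $|\Hom(\Z^n,G)|\equiv1\pmod{\ell^a}$. Ranging over all $\ell^a\,\|\,D_n$ and applying CRT finishes the argument.

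I expect the construction of $\sigma$ to be the only genuine obstacle: one must produce a single element of $\ell$-power order whose every nontrivial power is fixed-point-free simultaneously at all primes $p\mid|G|$, which is precisely where the Singer cycle (supplying order $p^n-1$, hence the exponent $n$ in $D_n$) and the coprime lifting of prime-to-$p$ elements from $\F_p$ to $\Z/p^{b_p}$ enter. Everything else—the two congruences for $D_n$ and the passage from $|\Hom(\Z^n,G)|$ to $k_{n-1}(G)$—is routine once $\gcd(D_n,|G|)=1$ has been noted.
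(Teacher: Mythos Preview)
Your argument is correct. The reduction to $|\Hom(\Z^n,G)|\equiv 1\pmod{D_n}$ is exactly right, and the Singer-cycle construction produces an element $\sigma\in\mathrm{GL}_n(\Z/e)$ of order $\ell^a$ all of whose nontrivial powers act without nonzero fixed vectors, so the free-orbit count gives the congruence. (One small wording issue: the ``appropriate power'' of a lift need not reduce to $\tau_p$ itself but only to a power of $\tau_p$ of the same order $\ell^a$; this suffices, since every nontrivial element of $\langle\tau_p\rangle$ still has $\tau_p^{\,j}-1$ invertible. Alternatively, choosing $k$ with $k\equiv 1\pmod{\ell^a}$ and $k\equiv 0\pmod{p^{c}}$ makes $\tilde\tau^{\,k}$ reduce to $\tau_p$ exactly.)

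The paper, however, takes a much shorter route using the M\"obius machinery already developed. From the identity $|G|\,k_{n-1}(G)=-\sum_{H\in\mathcal{N}_2(G)}\mu_2(H,G)\,|H|^n$ and the basic relation $\sum_{H\in\mathcal{L}_2(G)}\mu_2(H,G)=0$, one may subtract the constant $1$ inside the sum to obtain
\[
|G|\,k_{n-1}(G)-|G|^n \;=\; -\sum_{H\in\mathcal{L}_2(G)}\mu_2(H,G)\bigl(|H|^n-1\bigr).
\]
Since every $|H|$ is a product of the primes $p_i$, one has $|H|^n\equiv 1\pmod{D_n}$ for each $H$, so the right-hand side is divisible by $D_n$; cancelling the unit $|G|$ finishes. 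Thus the paper exploits the Dirichlet-series expression to exhibit $|\Hom(\Z^n,G)|-1$ directly as an integer combination of the quantities $|H|^n-1$, avoiding any auxiliary group action. Your approach, by contrast, is self-contained---it does not rely on Proposition~\ref{P-as-Dirichlet} or any M\"obius identity---and gives a pleasant structural explanation (free $\Z/\ell^a$-action) for the congruence, at the cost of the Singer-cycle and lifting steps.
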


The idea to prove this is as follows: first recall that 
\[ \sum_{H\in \mathcal{L}_2(G)}\mu_2(H,G) =0 .\]
So 
\[ -|G|k_{n-1}(G) = -|G|^n +\sum_{H\in \mathcal{L}_2(G)}\mu_2(H,G)(|H|^n+a),\]
for any number $a$.  Let us restrict to the case $a=-1$. 
Thus, the last three equations show that if $d$ divides $|H|^n -1$ for all $H\subseteq  G$, then $k_{n-1}|G| \equiv |G|^{n} \pmod d .$
If $D_n$ is as defined in the proposition, then one can see that $D_n$ is relatively prime to $|G|$ and that it divides $|H|^n-1$ for all $H\subseteq G$. This 
yields the desired congruence.


\begin{thebibliography}{999}

\bibitem{AC} A.~Adem, F.~Cohen, {\em Commuting Elements and
Spaces of Homomorphisms}, Mathematische Annalen \textbf{338} (2007),
587--626.

\bibitem{ACT} A.~Adem, F.~Cohen, E.~Torres-Giese {\em Commuting elements, simplicial spaces and filtrations of 
classigying spaces}. arXiv:0901.0137.

\bibitem{Mann} A.~Mann, {\em Conjugacy classes in finite groups}.  Israel J. of Math.   \textbf{31} (1978), no. 1, 78--84.

\bibitem{Brown} K.~S.~Brown, {\em The coset poset and probabilistic zeta function of a finite group}.  J. Algebra  \textbf{225} (2000),  no. 2, 989--1012.

\bibitem{Boston} N.~A.~Boston, {\em A probabilistic generalization of the Riemann zeta function}. 
 Analytic number theory, Vol. 1,  155--162, Progr. Math., 138, BirkhŠuser Boston, Boston, MA, 1996.

\bibitem{Dwyer} W.~Dwyer, {\em Homotopy theoretic methods in group cohomology.} Advanced Courses in Mathematics. 
CRM Barcelona. BirkhŠuser Verlag, Basel, 2001.

\bibitem{GR} R.~Guralnick, G.~Robinson, {\em On the commuting probability in finite groups.} Journal of Algebra {\bf 300} (2006), 509--528.

\bibitem{Hall} P.~Hall, {\em The eulerian functions of a group}. Quart. J. Math. {\bf 7} (1936), 134--151.

\bibitem{Lescot} P.~Lescot, {\em Isoclinism classes and commutativity degrees of finite groups.} Journal of Algebra {\bf 177} (1995),
847-869. 

\bibitem{LS} A.~Lubotzky, D.~Segal, {\em Subgroup growth.} Progress in Mathematics, 212. BirkhŠuser Verlag, Basel, 2003.

\bibitem{Sc} R.~Schmidt,
{\it Subgroup lattices of groups}, de Gruyter Expositions in
Mathematics 14, Berlin, 1994.

\bibitem{Solomon} L.~Solomon, {\em Partially ordered sets with colors.}  Relations between combinatorics and other parts of mathematics,  
pp. 309--329, Proc. Sympos. Pure Math., XXXIV, Amer. Math. Soc., Providence, R.I., 1979. 




\end{thebibliography}
\end{document}